\newcommand{\mP}{{\mathbb P}}
\newcommand{\calT}{{\mathcal T}}
\newcommand{\Legge}{\overset{  \mathcal{L}  }{=}}
\newcommand{\N}{\mathbb N}
\newcommand{\R}{\mathbb R}
\newcommand{\Z}{\mathbb Z}
\newtheorem{theorem}{Theorem}
\newtheorem{lemma}{Lemma}
\newtheorem{rem}{Remark}
\newtheorem{prop}{Proposition}
\newtheorem{definition}{Definition}
\newtheorem{corollary}{Corollary}
\title[Infinite paths with bounded sums]{Infinite paths on  a random environment of $\Z^2$ with
bounded and recurrent sums. }
\author{Emilio De Santis}
\author{Mauro Piccioni}
\address{University of Rome Sapienza, Department of Mathematics.
Piazzale Aldo Moro, 5, I-00185, Rome, Italy}
\email{desantis@mat.uniroma1.it}
\email{piccioni@mat.uniroma1.it}
\begin{document}

\begin{abstract}
This paper considers a random structure on the lattice $\Z^2$ of the following kind. To each edge $e$ a random variable $X_e$ is assigned, together with a random sign $Y_e \in \{-1,+1\}$. For an infinite self-avoiding path on $\Z^2$ starting at the origin consider the sequence of partial sums along the path. These are computed by summing the $X_e$'s for the edges $e$ crossed by the path, with a sign depending on the direction of the crossing. If the edge is crossed rightward or upward the sign is given by $Y_e$, otherwise by $-Y_e$. We assume that the sequence of $X_e$'s is i.i.d., drawn from an arbitrary common law and that the sequence of signs $Y_e$ is independent, with independent components drawn from a law which is allowed to change from horizontal to vertical edges. First we show that, with positive probability,  there exists an infinite self-avoiding path starting from the origin with bounded partial sums. Moreover the process of partial sums either returns to zero or at least it returns to any neighborhood of zero infinitely often. These results are somewhat surprising at the light of the fact that, under rather mild conditions, there exists with probability $1$ two sites with all the paths joining them having the partial sums exceeding  in absolute value any prescribed constant.
\medskip
\medskip
\newline
\emph{Keywords:}  Oriented Percolation; Random Environment; Recurrence;  Graph Algorithms; Optimization.
\newline
\medskip
\emph{AMS MSC 2010:} 60K35, 82B44.
\end{abstract}

\maketitle

\section{Introduction}

The problems considered in the present paper have been inspired by those addressed in \cite{BM01}. In \cite{BM01} the lattice $\Z^2$ is endowed with an \emph {environment} made by an i.i.d. field $(X_i, i \in \Z^2)$ of \emph{sign} variables, i.e. variables assuming values $\pm 1$, placed on the vertices of the lattice. The authors consider the sequence of partial sums of these variables made along infinite self-avoiding paths. They prove that 
when the parameter $p=\mathbb {P} (X_0=+1)$ is close to $1/2$ paths with  partial sums bounded by a  positive constant $C$ exist with probability $1$, and moreover the process of partial sums  returns to zero infinitely often (indeed every $42$ steps); at the contrary, when $p$ is close to $0$ or $1$ no path with partial sums bounded  exists, almost surely. Problems of this kind are clearly related to the area of first passage percolation, see \cite{50anni} for a review and \cite{BvdHG} \cite{Damron17} for recent results.

In our model the environment consists of a field of independent random variables placed on the edges of $\Z^2$, that are of two kind: a real random variable $X_e$, drawn from an arbitrary law $\mathcal{L} =\mathcal{L} (X_e)$, and a sign random variable $Y_e$, with $\mathbb{P} (Y_e=+1 )$ equal either to $p_o$ or to $p_v$, in case the edge $e$ is horizontal or vertical, respectively. When a path crosses the edge $e$ upward or rightward, then $Y_eX_e$ is added to the current partial sum of the path, whereas when it crosses $e$ downward or leftward, then $-Y_eX_e$ is added to the sum. 
It may be suggestive to consider the random variables $ X_e $ to be positive as it happens in problems of first passage percolation, where however it is not possible to change the contribution of an edge by crossing it in the opposite direction. This is a possible explanation of the fact that our main result (Theorem 1) establishes the a.s. existence of a path with bounded sums irrespectively of $\mathcal{L}$ and the parameters $p_o$ and $p_v$, so there is no phase transition behaviour as in \cite{BM01}. Concerning the recurrence of the process of partial sums to zero, for general $\mathcal{L}$ 
it is not possible to get more  that zero is an accumulation point (Theorem 2).

A suggestive interpretation of this model is the following: the random variables $Y_eX_e$ represent a field of \emph{slopes} on the edges of $\Z^2$. One can interpret a partial sum on a path as an \emph{height}, which is updated by adding the slope of each visited edge, that has to be reversed when the edge is crossed in opposition with the standard orientation of the two axes. With this interpretation one can see some similarities with the model introduced in \cite{Haggstrom}.  Loosely speaking, in this paper an environment of i.i.d. random variables is considered on the vertices of a quasi-transitive graph. A certain random payoff is obtained as a function of the environment, depending on the choice of an edge sequence. The authors classify the support of the optimal payoff in terms of the structure of the underlying graph. As described below, our Theorem 1 and Theorem 3 can also be seen as results about the support of the optimal value of some payoff which is a function of the environment and depends on a selection of paths.

Our main result about the model is that, irrespectively of $\mathcal{L}$, $p_o$ and $p_v$, with probability $1$ there exists a self-avoiding path $\gamma^*$ with partial sums bounded by a suitable  positive constant $C$. This path is realized through a suitable construction of blocks of edges induced by a tessellation of the underlying Euclidean plane, on which a structure of oriented graph is specified. More precisely the path $\gamma^*$ is made by concatenating paths living in an oriented path of \emph{good} blocks. The a.s. existence of such an oriented path is established using a classical result  which is applied to $1$-dependent fields (see \cite{Grimmett,LSS}). Indeed, taking blocks suitably large, we can made the probability that a block is good arbitrarily close to $1$.

The construction of $\gamma^*$ allows also to address the question of the recurrence to zero of its partial sums. Indeed, the partial sums of $\gamma^*$ at the exit of each block are shown to be an homogeneous Markov process. For $p_o$ and $p_v$ \emph{non-degenerate}, i.e. lying inside the open interval $(0,1)$, we can establish that $0$ is either recurrent or at least topologically recurrent.

 This situation is somewhat surprising at the light of another result established in the paper, concerning the a.s. existence of a pair of sites $u$ and $v$ in $\Z^2$ with the property that all the paths joining them have partial sums exceeding any given positive constant $C$. In the non-degenerate case the set of $\mathcal{L}$'s for which this property holds is characterized to be the set of laws with a non-zero atom or an unbounded support.

 Finally, we briefly outline the structure of the paper. In Section 2 the basic definitions are introduced and the results are stated. In Section 3 two lemmas are presented, which are fundamental for the subsequent proofs. They have been singled out since they may have an independent interest. In Section 4 the basic block construction used in the proofs is introduced. Finally, in Section 5 the proof of the results stated in Section 2 are provided.

\section{Main results}

Before stating our main results we recall the precise definition of the mathematical objects we are interested in.
The square lattice $\mathbb{L}_2$ is a graph $   (\Z^2 , E_2)$
with set of edges
$$
E_2 = \left \{  \{  ( x_1, y_1) , (x_2, y_2)    \} :    x_1, y_1 , x_2, y_2 \in \Z,
\,\,\,|x_1    -x_2  |+|  y_1-y_2    | =1 \right   \}    .
$$
In the sequel we will define \emph{horizontal} edges to be those with $|x_1- x_2| =1$ and \emph{ vertical} edges those with $|y_1-y_2| =1$.
We will also use the oriented square lattice $\vec{G}_2 =( \Z^2, \vec{E}_2 )$, where the set
of oriented edges is
\begin{equation}\label{e2}
\vec{E}_2 =\left  \{     \left  (  (x,y) , (x+1,y) \right ): x, y \in \Z  \right   \} \cup
\left  \{     \left  (  (x,y) , (x,y+1) \right ): x, y \in \Z  \right   \}  .
\end{equation}
In other words each horizontal edge in $ E_2 $ is oriented to the right, and each vertical edge is oriented upward.
The origin   is denoted by $O =(0,0)$.

A finite path $\gamma$ from $v_I\in \Z^2 $ to
$v_F\in \Z^2 $ is a finite sequence of vertices and
edges
$$
\gamma=( v_I=v_0, e_1,v_1,  e_2, \ldots,v_{N-1},  e_N , v_N =v_F)
$$
with $v_k \in \Z^2$ and
 $e_k =\{v_{k-1}, v_k\}\in E_2$, for $k =1, \ldots , N $. 
 The number $N$ of edges used by the path $\gamma$ is denoted by $|\gamma|$. In the sequel we will say that the path $\gamma $ joins $v_I $ to $v_F$.
    All the paths  from $v_I  $  to $v_F $ are collected in the set $\Gamma(v_I,v_F)$. Paths from $v_I$ to $v_F$ can be specified either by the sequence of vertices or by the sequence of edges.
 Paths with $v_I=v_F$ are called cycles.  Finally, we call a path \emph{oriented} if $(v_{k-1}, v_k) \in \vec{E} $ for $k =1, \ldots, N$.

   An infinite path $\gamma$ from $v_I $ is an infinite sequence of
vertices and edges
$$
\gamma=( v_I= v_0, e_1,v_1,  e_2, \ldots,v_{N-1},  e_N
, \ldots )
$$
such that $v_k \in \Z^2$ and
 $e_k =\{v_{k-1}, v_k\}\in E_2$, for $k \in \N $. Oriented infinite paths are defined likewise.
  An infinite \emph{self-avoiding path from} $v_I$ is an infinite path starting in $v_I$
 with  all the vertices different. Oriented paths are always self-avoiding. For any $\mathbf{b} \in \Z^2$, we denote by $\Gamma_{\mathbf{b}}$ the collection of infinite self-avoiding paths starting from $\mathbf{b}$.

The translation $\gamma+\mathbf {b}$ of a path $\gamma$ (finite or infinite) by means of a vector $\mathbf {b} \in \Z^2$ is defined by translating all the vertices by $\mathbf {b}$, inserting the appropriate edges in between. When the path $\gamma_1$ ends in the vertex where $\gamma_2$ starts, the two paths can be \emph{concatenated}, giving rise to a new path indicated by $\gamma_1 \odot \gamma_2$: vertices in $\gamma_1$ are followed by vertices in $\gamma_2$, except the first one. Notice that the concatenation of two self-avoiding paths
is not necessarily self-avoiding. A cycle $\sigma$ can be concatenated with itself an arbitrary number $i$ of times: the resulting cycle will be indicated by $\sigma^{\odot i}$.
Another operation on a finite path which is worth to introduce is its \emph{reversal}: the reversal of
$ \gamma=(v_0, e_1,v_1,  \ldots,v_{N-1},  e_N , v_N)$ is $-\gamma=(v_N, e_N
,v_{N-1},  \ldots,  v_{1},  e_1 , v_0)$.
Finally, given a finite path $\gamma=( v_0, e_1,v_1,  e_2, \ldots,v_{N-1},  e_N, v_N ),$
we find convenient to use the notation $\gamma_{a,b}$ for the \emph {truncation} $(v_a,e_{a+1},v_{a+1},\ldots,v_{b-1},e_b, v_{b})$, where $0\leq a <b \leq N$.

\medskip

Now let $ \mathbf{X} =(X_e: e \in E_2) $ and $ \mathbf{Y}  =(Y_e: e \in E_2) $   be two sequences of independent random variables, where
\begin{itemize}
 \item[a)]   $ \mathbf{X}  $ and $    \mathbf{Y}  $ are independent;
  \item[b)] for any $e \in E_2$, $X_e$ has the same law  $\mathcal{L}$ which   is different from $\delta_0$ to avoid trivialities;
  \item[c)] if $e =\{ (x,y), (x+1,y) \}$ then $\mathbb{P} (Y_e=1 ) =p_o$ and $\mathbb{P} (Y_e=-1 ) =1-p_o$;
  \item[d)]   if $e =\{ (x,y), (x,y+1) \}$ then $\mathbb{P} (Y_e=1 ) =p_v$ and $\mathbb{P} (Y_e=-1 ) =1-p_v$.
\end{itemize}
Without loss of generality we can assume  $  \frac{1}{2} \leq p_o \leq p_v \leq 1$, by changing the orientation of the axes
and exchanging the two coordinates when needed.

For a finite path $\gamma = (v_0,e_1 , v_1, e_2 , \ldots, e_{|\gamma|}, v_{|\gamma|})$ or an infinite one $\gamma = (v_0,e_1 , v_1, e_2 , \ldots)$, we define
\begin{equation} \label{sparziali}
T_N (\gamma)=\sum_{k=1}^{N}
 Z_{ (v_{k-1}, v_{k})   } ,\,\,\, S_N (\gamma)=\sum_{k=1}^N  Z_{ (v_{k-1}, v_{k})   } X_{e_k}, \text{  } N \leq |\gamma| \text{ or } N<\infty
\end{equation}
where
\begin{equation}\label{ZZtop}
  Z_{ (v_{k-1}, v_{k})   } =     (x_{k+1} - x_{k}  + y_{k+1}   - y_{k}  ) Y_{e_k}  ,
\end{equation}
 for $v_{k} =(x_k , y_k) \in \Z^2 $,  $k =1,  \ldots, N $.

 Notice that $  (x_{k+1} - x_{k}  + y_{k+1}   - y_{k}  ) $ is either $+1$, when the edge
$e_k $ is crossed according to the orientation of $\vec {E}_2$, or $-1$, when it is crossed in the opposite direction.
Therefore it is legitimate to interpret the field $(Y_e: e \in E_2)$ as defining a random orientation of $E_2$ with the following prescription: each path crossing an edge $e $ in agreement (in opposition) with this orientation receives a contribution to its sum equal to $X_e$ ($-X_e$).

For finite paths $\gamma$ of length $N$, we will preferably  write $T(\gamma)=T_N (\gamma)$ and $S(\gamma)=S_N (\gamma)$.

\medskip

Now let us define the random variables
$$
\Sigma (\mathbf {b})=\inf_{\gamma \in \Gamma_{\mathbf {b}}} \sup_{N \in \mathbb {N}} |S_N (\gamma)|, \text{ for } \mathbf{b} \in \Z^2.
$$
By translation invariance it is clear that the law of $\Sigma (\mathbf {b})$ does not depend on $\mathbf {b}$. $\Sigma (\mathbf {b}) $ is either infinite almost surely or its distribution function is positive at some positive value.
We denote
by $M_c = M_c (p_o,p_v ; \mathcal{L})$  the infimum of these values or $+ \infty$ when no such value exists. Here is our main result.

  \begin{theorem}\label{primo}
  For any $p_o, p_v \in [0,1]$ and for any law $\mathcal{L}$ the constant $ M_c (p_o,p_v ; \mathcal{L})    $ is finite.
  \end{theorem}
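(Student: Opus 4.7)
The plan is a multi-scale renormalization argument: tessellate $\Z^2$ into large square blocks, identify a high-probability ``good block'' event, and then realize the desired path $\gamma^*$ as a concatenation of bounded-sum crossings along an infinite oriented path of good blocks produced by stochastic domination.

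More precisely, fix a scale $L$ to be chosen large, tessellate $\Z^2$ into disjoint $L\times L$ blocks, and regard the blocks themselves as vertices of a renormalized copy of $\vec{G}_2$. For each block, declare the event $G$ that the block is \emph{good}: informally, (i) the block admits a rich enough family of finite ``crossings'' from a prescribed entrance side to a prescribed exit side, and (ii) the flexibility granted by the two preparatory lemmas of Section~3 is large enough that one can select among these crossings one whose total contribution can be tuned to approximate any prescribed target value within a fixed tolerance, while keeping partial sums along the crossing uniformly bounded by a deterministic constant. Because $G$ depends only on the environment in a bounded neighborhood of the block, the corresponding indicator field on the renormalized lattice is $1$-dependent, and the entrance/exit prescriptions can be arranged so that good-block crossings concatenate correctly along oriented neighbor pairs in $\vec{G}_2$.

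The next step is to make $\mathbb P(G)$ close to $1$: exploiting the abundance of independent copies of $(X_e,Y_e)$ inside a block (growing with $L$) together with the preparatory lemmas, one shows $\mathbb P(G)\geq 1-\epsilon$ for any prescribed $\epsilon>0$, provided $L$ is taken large enough, and \emph{uniformly} in the law $\mathcal L$ and in $p_o,p_v$. The Liggett--Schonmann--Stacey stochastic domination theorem \cite{LSS}, coupled with the fact that oriented site percolation on $\vec G_2$ is supercritical at densities close to $1$ \cite{Grimmett}, then ensures that, for $\epsilon$ below a suitable threshold, with positive probability the block containing $O$ is the starting point of an infinite oriented path of good blocks. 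Conditioning on this positive-probability event, one builds $\gamma^*$ block by block: upon entering a good block with current partial sum $s$, invoke the flexibility of the block to choose the crossing whose contribution brings $s$ as close to $0$ as the tolerance allows, and whose intermediate partial sums remain uniformly bounded. Concatenating these crossings produces an infinite self-avoiding path from $O$ whose partial sums are bounded by a deterministic constant $C$, so $\Sigma(O)\leq C$ on an event of positive probability, and therefore $M_c(p_o,p_v;\mathcal L)\leq C<\infty$.

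The main obstacle is property~(ii) in the definition of good: one must produce, with probability approaching $1$ as $L\to\infty$ and uniformly over arbitrary $\mathcal L\neq \delta_0$, a combinatorial family of crossings that simultaneously keeps partial sums bounded and offers enough steering to compensate drift accumulated in earlier blocks. The difficulty is sharpest when $\mathcal L$ has unbounded support or no atoms, since a single atypically large $|X_e|$ could ruin a naively chosen crossing and continuously distributed $X_e$ prevent cancellation by trivial matching. This is precisely where I would rely on the two preparatory lemmas of Section~3: one to guarantee a pool of crossings along which large $X_e$'s can be either bypassed or compensated by a local reversible detour, the other to guarantee that the pool is rich enough that the distribution of their total contributions is sufficiently spread out to allow the steering in step~(ii).
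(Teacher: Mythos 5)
Your overall strategy matches the paper's: a block renormalization, a ``good block'' event of probability close to $1$, $1$-dependence plus the Liggett--Schonmann--Stacey domination theorem to get supercritical oriented percolation of good blocks, and an adaptive block-by-block choice of crossings that steers the running sum back toward $0$ while keeping it bounded. The final steering step you describe is essentially the paper's recursion: each good block supplies one crossing with sum in $[\bar K_1,\bar K_3]$ and one with sum in $[-\bar K_3,-\bar K_1]$, and one always picks the sign opposite to the current sum.

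However, there is a genuine gap at the heart of your step (ii), namely the claim that axis-aligned $L\times L$ blocks admit, with probability tending to $1$ \emph{uniformly in $\mathcal L$, $p_o$, $p_v$}, crossings in the oriented (rightward/upward) directions whose sums can be tuned near any target with uniformly bounded partial sums. This fails in general. Take $p_o=p_v=1$ and $X_e\equiv 1$ a.s.: then the sum along any path equals its total displacement $\Delta x+\Delta y$, so a left-to-right crossing of a square block can only have small sum if it simultaneously descends by about $L$; after a few consecutive rightward steps of the renormalized path in the same row, the vertical room inside the row is exhausted and the sum of any admissible crossing is forced to be of order $L$. More generally, whenever $p_o,p_v$ are away from $1/2$, a rightward or upward crossing of an axis-aligned block carries a systematic drift of order $L$ in $T(\gamma)$, which fluctuations of order $\sqrt L$ cannot cancel, so no ``preparatory lemma'' can rescue square blocks; in particular your uniformity claim is false, and the per-law version is still unproved as stated. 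The paper's key idea, which is missing from your proposal, is to tilt the tessellation: the blocks are halves of parallelograms whose slope is a rational (or Dirichlet) approximation of $\rho=(2p_o-1)/(2p_v-1)$, so that the canonical crossing paths $\gamma_i^u(n)$ (right-and-down, respectively left-and-up) have asymptotically unbiased sign sums (\eqref{mediapiccola}--\eqref{boundmean}); the local CLT coupling (Lemma \ref{TLL}) then symmetrizes the crossing sums, which is what guarantees that among $m$ independent crossings both a sufficiently positive and a sufficiently negative one exist with probability close to $1$ (event $F_2$), making the steering possible. Without this bias-compensating geometry (or an equivalent device), your construction does not go through for general $p_o,p_v,\mathcal L$.
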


The finiteness of  $M_c$  means that for any $\delta > 0 $ there is a positive probability of finding
a self-avoiding path from a given $\mathbf{b}$ whose partial sums are bounded by $M_c + \delta$.

The following is an easy consequence of the previous theorem.
\begin{corollary} \label{coro}
For any $p_o, p_v \in [0,1]$ and for any law $\mathcal{L}$,
\begin{itemize}
\item[\emph{i.}] for any $\delta >0$ there exists $\mathbf{b} \in Z^2$ and a self-avoiding  path $\gamma \in \Gamma_{\mathbf{b}}$ with partial sums bounded by $M_c + \delta$;
\item[\emph{ii.}]  the random variable $\Sigma (\mathbf{b} )$ is almost surely finite, for any $\mathbf{b}\in \Z^2$.
\end{itemize}
\end{corollary}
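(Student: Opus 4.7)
The plan is to derive both parts from Theorem~\ref{primo} by combining a translation--ergodicity argument with an elementary path--splicing construction.

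For part (i), fix $\delta>0$ and consider the event
\[
A_\delta=\{\exists\,\mathbf{b}\in\Z^2 \text{ with } \Sigma(\mathbf{b})\leq M_c+\delta\}.
\]
This event is invariant under the $\Z^2$-action by lattice translations. Under this action the joint law of the environment $((X_e,Y_e):e\in E_2)$ is preserved, since translations send horizontal edges to horizontal edges and vertical to vertical; and since the coordinates are independent, the action is mixing, hence ergodic. Therefore $\mathbb{P}(A_\delta)\in\{0,1\}$, and by the definition of $M_c$ we have $\mathbb{P}(A_\delta)\geq \mathbb{P}(\Sigma(O)\leq M_c+\delta)>0$. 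Thus $\mathbb{P}(A_\delta)=1$, which gives (i).

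For part (ii), I would prove the deterministic implication: if $\Sigma(\mathbf{b})<\infty$ for some $\mathbf{b}\in\Z^2$, then $\Sigma(\mathbf{a})<\infty$ for every $\mathbf{a}\in\Z^2$. Together with (i), applied with, say, $\delta=1$, this yields $\mathbb{P}(\Sigma(\mathbf{a})<\infty)=1$ for every $\mathbf{a}$. To prove the implication, let $\gamma=(v_0=\mathbf{b},e_1^\gamma,v_1,e_2^\gamma,\ldots)$ be an infinite self-avoiding path with $M:=\sup_n|S_n(\gamma)|<\infty$ and fix $\mathbf{a}\in\Z^2$. Choose a shortest path $\rho=(u_0=\mathbf{a},e_1^\rho,u_1,\ldots,e_m^\rho,u_m)$ in $\mathbb{L}_2$ from $\mathbf{a}$ to the vertex set $V_\gamma=\{v_k:k\geq 0\}$; such a $\rho$ exists by connectedness of $\mathbb{L}_2$, is automatically self-avoiding, and, by minimality, satisfies $u_0,\ldots,u_{m-1}\notin V_\gamma$ while $u_m=v_{k_0}$ for some $k_0\geq 0$. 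Hence the concatenation $\tilde\gamma=\rho\odot(v_{k_0},e_{k_0+1}^\gamma,v_{k_0+1},\ldots)$ is an infinite self-avoiding path from $\mathbf{a}$, and a direct unwinding of the definitions, together with the identity $\sum_{k=m+1}^{n}Z_{(\tilde v_{k-1},\tilde v_k)}X_{\tilde e_k}=S_{k_0+n-m}(\gamma)-S_{k_0}(\gamma)$ for $n\geq m$, yields
\[
|S_n(\tilde\gamma)|\leq \sum_{k=1}^m|X_{e_k^\rho}|+2M<\infty
\]
for every $n$, so that $\Sigma(\mathbf{a})<\infty$ as claimed.

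The only delicate point in this sketch is the self-avoidance of the spliced path $\tilde\gamma$, which is secured by the minimality of $\rho$ (any shortcut would contradict that $\rho$ is a shortest path to $V_\gamma$). Everything else is an application of ergodicity and of the additive structure of the partial sums along a concatenation.
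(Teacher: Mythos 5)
Your proposal is correct and follows essentially the same route as the paper: part (i) via translation invariance and ergodicity of the edge environment together with the definition of $M_c$ (and Theorem~\ref{primo} to ensure positive probability), and part (ii) by splicing a finite path from $\mathbf{a}$ onto the tail of a path with bounded partial sums, the only difference being that you make the self-avoidance and the sum bound explicit (shortest path to $V_\gamma$, bound $\sum_k |X_{e_k^\rho}|+2M$), whereas the paper states this informally. The only cosmetic point is the $\varepsilon$-slack in deducing an actual path bounded by $M_c+\delta$ from $\Sigma(\mathbf{b})\leq M_c+\delta$, which is settled by running your argument with $\delta/2$.
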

\begin{proof} The proof of item i. is an immediate consequence of the ergodicity of the model.
As far as item ii.  is concerned, let $\gamma$ be a path as in item i. with say $\delta=1$.  One can always  construct a path from $\mathbf{b}$
with bounded partial sums by first following an arbitrary path starting from $\mathbf{b}$ which intersects $\gamma$. After the first intersection the path $\gamma$ is followed.
\end{proof}

With the next result we turn our attention to the recurrence properties of partial sums of infinite paths.

\begin{theorem} \label{Markovtorna}
For any $p_o, p_v \in (0,1)$, there exists almost surely a   self-avoiding path  $\hat \eta \in \Gamma_O$ such that the sequence of partial sums $(S_n (\hat \eta ) : n \in \N)$ is bounded and has zero as an accumulation point.
\end{theorem}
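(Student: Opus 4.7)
\medskip

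The natural candidate for $\hat\eta$ is the path $\gamma^*$ already built in the proof of Theorem \ref{primo}. Recall that $\gamma^*$ is obtained by concatenating sub-paths $\pi_1,\pi_2,\ldots$ living in a sequence of \emph{good} blocks $B_1,B_2,\ldots$ aligned along an infinite oriented path of an associated $1$-dependent percolation. Let $n_k=|\pi_1|+\cdots+|\pi_k|$ denote the index at which $\gamma^*$ exits $B_k$ and set $M_k=S_{n_k}(\gamma^*)$. Since partial sums along $\gamma^*$ stay in $[-C,C]$, the sequence $(M_k)_{k\geq 0}$ takes values in this compact interval; by construction each $B_k$ interacts with the environment only through disjoint edge sets and each $\pi_k$ depends on $M_{k-1}$ only through the entry point of $B_k$, so $(M_k)$ is a homogeneous Markov chain on $[-C,C]$.

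The plan is to prove the following uniform topological irreducibility towards $0$: for every $\epsilon>0$ there exist an integer $n_0=n_0(\epsilon)$ and a constant $\delta=\delta(\epsilon)>0$ such that
\[
\mathbb{P}\bigl(|M_{k+n_0}|<\epsilon \,\bigm|\, M_k=m\bigr)\geq \delta,\qquad m\in[-C,C].
\]
Once this is established, the $n_0$-step skeleton $(M_{j n_0})_{j\geq 0}$ satisfies, by the Markov property, a conditional Borel--Cantelli condition with $\mathbb{P}(|M_{(j+1)n_0}|<\epsilon \mid \mathcal{F}_{jn_0})\geq \delta$ a.s.; consequently $|M_{j n_0}|<\epsilon$ for infinitely many $j$, almost surely. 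Intersecting over a countable sequence $\epsilon_r\downarrow 0$ gives that $0$ is an accumulation point of $(M_k)$, hence of $(S_n(\hat\eta))$.

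The key step, and the main obstacle, is verifying the uniform irreducibility. Here the hypothesis $p_o,p_v\in(0,1)$ enters crucially: flipping any $Y_e$ inside the blocks $B_{k+1},\ldots,B_{k+n_0}$ only costs a factor bounded below by $q:=\min(p_o,p_v,1-p_o,1-p_v)>0$ raised to the total number $N_0$ of edges involved. One wants to exhibit, for each starting value $m$, a specific sign configuration on these edges such that (i) the blocks remain good (and the oriented percolation structure that picks them is preserved), and (ii) the cumulative contribution of $\pi_{k+1}\odot\cdots\odot\pi_{k+n_0}$ equals $-m$ up to $\epsilon$. For (ii) one exploits that a good block contains many traversed edges whose $X_e$-contributions appear with an independent $\pm$ sign, so that by the non-degeneracy of $\mathcal{L}$ (and, if $\mathcal{L}$ has large atoms, by taking $n_0$ large enough) the set of reachable net contributions is $\epsilon$-dense in $[-2C,2C]$. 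For (i) one uses that the definition of a good block is insensitive to local modifications of the $Y_e$'s provided the topology of admissible sub-paths is unchanged, an invariance built into the block construction of Section 4. Uniform positivity of the probability of the chosen sign configuration, combined with a union bound over an $\epsilon$-net of possible targets $-m$, yields a single $\delta>0$ that works for every $m\in[-C,C]$, completing the proof.
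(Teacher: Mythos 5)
Your overall skeleton (the Markov chain $(M_k)$ of partial sums at block exits, plus a conditional Borel--Cantelli argument to turn uniform lower bounds on hitting a neighborhood of $0$ into infinitely many visits) is the same as the paper's, but your key step is false as stated. You claim that for every $\epsilon>0$ the chain can reach $(-\epsilon,\epsilon)$ in $n_0$ steps with probability $\geq\delta$ \emph{uniformly in} $m\in[-C,C]$, because ``the set of reachable net contributions is $\epsilon$-dense in $[-2C,2C]$.'' This fails whenever $\mathcal{L}$ is lattice-valued (e.g.\ $X_e\equiv 1$, which is allowed): all increments of the chain lie in the additive group generated by the support $\mathcal{X}$, so from a starting value $m$ the chain stays in $m+\mathrm{gr}(\mathcal{X})$, a translate of a discrete lattice, and no sign-flipping of the $Y_e$'s can bring it $\epsilon$-close to $0$ for small $\epsilon$ unless $m$ itself lies in that lattice. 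This is precisely why the paper splits into cases: for finite rational support it proves \emph{exact} recurrence to $0$ by an algebraic argument (flipping all signs in a block, legitimate since $p_o,p_v<1$, makes the positive and negative increment supports coincide, and one then orders the summands of an element of $\mathrm{gr}(\mathcal{S})$ to realize a return to $0$ as a positive-probability trajectory), while in the irrational case the $\epsilon$-density you assert is a genuine theorem, proved via the induced map being an irrational rotation of the circle (dense orbits) together with a perturbation/continuity argument when the two incommensurable support points are not atoms, and the uniformity in $m$ is obtained by compactness over finitely many intervals $I_h$. None of this is supplied by ``non-degeneracy of $\mathcal{L}$.''

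A second gap: you take $\hat\eta=\gamma^*$ from Theorem \ref{primo}, but that path exists only with \emph{positive} probability (oriented percolation of good blocks from the origin), whereas the theorem asserts almost sure existence of a path from $O$. The cheap fix of Corollary \ref{coro} (walk from $O$ until you meet a good path, then follow it) does not work here, because the prefix shifts all subsequent partial sums by a nonzero constant, destroying accumulation at $0$. The paper instead finds, a.s.\ by ergodicity under vertical translations, a percolating family of good blocks started from some $B^u(h,h)$, prepends the vertical path $\beta^0_{0,2(3m+1)h}$, and then must track the resulting nonzero initial value $s_0$ through the Markov analysis; in the countably rational case this even forces a redefinition of good blocks with adaptively chosen finite subsets of $\mathcal{X}$ (and a stopping-time/Borel--Cantelli argument to show the adaptive percolation eventually succeeds), so that the initial value lies in the lattice where exact recurrence holds. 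Your proposal omits this issue entirely, so even granting your irreducibility claim you would only obtain the conclusion with positive probability, and only for starting values the chain can actually neutralize.
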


When proving Theorem \ref{Markovtorna} we will also clarify how in some cases its statement can be
strengthened.

The following  proposition is a minor addition to  Theorem \ref{primo}. It describes
the situations in which $M_c =0$.

 \begin{prop}\label{secondo}
 For any $p_o, p_v \in [0,1]$  and for any  law
 $\mathcal{L}$,
 \begin{itemize}
 \item[a.]  if $\mathbb{P} (X_e =0 ) < \frac{1}{2}$ then
    $ M_c (p_o,p_v ; \mathcal{L})  >0 $;
\item[b.]   if $\mathbb{P} (X_e =0 ) > \frac{1}{2}$ then
        $ M_c (p_o,p_v ; \mathcal{L}) =0  $.
 \end{itemize}
  \end{prop}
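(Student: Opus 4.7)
The plan is to handle the two parts with complementary percolation arguments, relying on the Harris--Kesten theorem that the critical probability for Bernoulli bond percolation on $\Z^2$ equals $1/2$.

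For part b., I would declare an edge $e$ \emph{open} whenever $X_e=0$. The hypothesis $\mathbb{P}(X_e=0)>1/2$ places this process in the supercritical regime, so the origin $O$ belongs to an infinite open cluster with positive probability. Being an infinite, locally finite, connected subgraph of $\mathbb{L}_2$, that cluster contains (by K\"onig's lemma) an infinite self-avoiding path $\gamma\in\Gamma_O$ whose edges are all open. For such a $\gamma$ every partial sum $S_N(\gamma)$ vanishes, so $\Sigma(O)=0$ on an event of positive probability, and consequently $M_c=0$.

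For part a.\ I would instead pick $\varepsilon>0$ small enough that $\mathbb{P}(|X_e|<\varepsilon)<1/2$; this is possible because $\{|X_e|<\varepsilon\}\downarrow\{X_e=0\}$ as $\varepsilon\downarrow 0$, together with the hypothesis $\mathbb{P}(X_e=0)<1/2$. Call an edge \emph{small} if $|X_e|<\varepsilon$; the small edges then yield a subcritical bond percolation on $\Z^2$, so almost surely every small-edge cluster is finite. Given any $\gamma\in\Gamma_O$ with edge sequence $(e_k)_{k\geq 1}$, these edges form an infinite connected subgraph of $\mathbb{L}_2$, hence cannot all be small; let $k^*=k^*(\gamma)$ be the first index with $|X_{e_{k^*}}|\geq\varepsilon$. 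Since $S_{k^*}(\gamma)-S_{k^*-1}(\gamma)=\pm X_{e_{k^*}}$, the triangle inequality yields $\max(|S_{k^*-1}(\gamma)|,|S_{k^*}(\gamma)|)\geq \varepsilon/2$, whence $\sup_N |S_N(\gamma)|\geq\varepsilon/2$. Taking the infimum over $\gamma\in\Gamma_O$ gives $\Sigma(O)\geq\varepsilon/2$ almost surely, and therefore $M_c\geq\varepsilon/2>0$.

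The argument is essentially two invocations of $p_c^{\rm bond}(\Z^2)=1/2$ glued to an elementary triangle-inequality observation, so no genuine obstacle is anticipated. The only mild subtlety lies in part a., namely the remark that one need not control the partial sums globally: controlling only the \emph{first} large jump already forces one of the two adjacent partial sums to exceed $\varepsilon/2$ in absolute value, irrespective of any cancellation that may occur later along $\gamma$.
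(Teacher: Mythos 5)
Your proposal is correct and follows essentially the same route as the paper: both parts invoke $p_c^{\mathrm{bond}}(\Z^2)=\frac{1}{2}$, using percolation of the zero-valued edges for part b.\ and subcriticality of the edges with $|X_e|<\varepsilon$ for part a., so that every infinite path from $O$ must cross an edge with $|X_e|\geq\varepsilon$. Your triangle-inequality step on the first large increment is exactly the content of the paper's auxiliary Lemma~\ref{facile}, so there is nothing substantively different to compare.
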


Another random variable we will be interested in is the following
$$
\bar {\Sigma} =\sup_{u,v \in \mathbb {Z}^2} \inf_{\gamma \in {\Gamma}_{u,v}} \sup_{N \leq |\gamma|} |S_N (\gamma)| .
$$
 Actually,  $ \bar {\Sigma} $  is almost surely constant because it is
a random variable that is invariant  with respect to the translations of an  ergodic system. The almost sure value of $\bar {\Sigma}$ will be denoted  by
$\bar M_c = \bar{M}_c (p_o,p_v ; \mathcal{L}) $.

Notice that $\bar M_c  = + \infty  $ means that no matter how large the constant $C>0$ is,
there exist $u, v \in \Z^2 $ with the property that any path, not necessarily self-avoiding, from $u$ to $v$ has a partial
sum that exceeds $C$ almost surely. The necessary and sufficient conditions ensuring $\bar M_c < + \infty $ are rather restrictive,
as stated in the following theorem
\begin{theorem} \label{uniforme}
For $p_o, p_v \in (0,1)$ then
$$
\bar M_c (p_o, p_v, \mathcal{L}) < \infty  \Leftrightarrow
{\mathcal{L}} \text{ has bounded support and no atoms different from zero.  }
$$
\end{theorem}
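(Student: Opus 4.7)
I would prove the two implications separately, using the contrapositive for the \emph{only if} direction.

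\emph{Necessity, unbounded case.} Suppose $\mathcal{L}$ has unbounded support, so $\mathbb{P}(|X_e|>C)>0$ for every $C>0$. By independence, the event that all four edges incident to the origin have $|X_e|>C$ has positive probability; by ergodicity of the $(\mathbf{X},\mathbf{Y})$-field under $\Z^2$-translations there almost surely exists a vertex $u$ with all four incident edges satisfying $|X_e|>C$. Every path from such a $u$ has $|S_1|=|X_{e_1}|>C$, so $\inf_{\gamma\in\Gamma_{u,v}}\sup_N|S_N|>C$ for every $v\ne u$, whence $\bar{\Sigma}\ge C$ almost surely. Letting $C\to\infty$ yields $\bar{M}_c=+\infty$.

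\emph{Necessity, atomic case.} Suppose $\mathcal{L}$ has an atom at some $a\ne 0$ with $q:=\mathbb{P}(X_e=a)>0$. Fix $n\ge 1$ and let $R_n=[-n,n]^2\cap\Z^2$; define $B_n$ as the event that (i) every edge $e$ with both endpoints in $R_n$ satisfies $X_e=a$, and (ii) each such $Y_e$ takes the $\pm 1$-value realizing $Y_eX_e=\psi(\mathrm{head})-\psi(\mathrm{tail})$ (head and tail in the sense of $\vec{E}_2$) for the pyramidal potential $\psi(x,y):=a(|x|+|y|)$. Since $p_o,p_v\in(0,1)$, each such $Y_e$ has the required value with probability at least $\min(p_o,1-p_o,p_v,1-p_v)>0$, so $\mathbb{P}(B_n)>0$. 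On $B_n$ the edge-field is the discrete gradient of $\psi$ and therefore curl-free on $R_n$; hence any path $\gamma$ from $O$ that remains in $R_n$ through step $k$ satisfies $S_k(\gamma)=\psi(v_k)$. A path from $O$ to any vertex outside $R_n$ has a first exit step $k^\ast$, and then $v_{k^\ast-1}\in\partial R_n:=\{w\in R_n:\max(|w_1|,|w_2|)=n\}$ with $|S_{k^\ast-1}(\gamma)|=|\psi(v_{k^\ast-1})|\ge|a|n$. Ergodicity propagates $B_n$ to arbitrary centers, giving $\bar{\Sigma}\ge|a|n$ almost surely; since $n$ is arbitrary, $\bar{M}_c=+\infty$.

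\emph{Sufficiency.} Assume $\mathcal{L}$ is supported in $[-M,M]$ and has no atom outside $\{0\}$. I would refine the block construction of Section~4 by strengthening the definition of \emph{good} block: a block $B$ is good if, for every choice of entry and exit vertex on $\partial B$ and every entry partial sum $s\in[-C_0,C_0]$, there is a self-avoiding path inside $B$ from entry to exit keeping partial sums in $[-C,C]$ and yielding an exit partial sum back in $[-C_0,C_0]$. Because $\mathcal{L}$ is continuous on $[-M,M]\setminus\{0\}$ and bounded, the contributions $Y_eX_e$ have a continuous law on $[-M,M]$; within a sufficiently large block an intermediate-value-type argument produces sub-paths realizing any target increment in a fixed interval, so one can cancel any accumulated imbalance. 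For block size large enough the goodness probability is arbitrarily close to $1$, and the $1$-dependent percolation argument used in Theorem~\ref{primo} yields an oriented chain of good blocks connecting the blocks containing any $u,v$; concatenating the per-block sub-paths produces a path $\gamma\in\Gamma_{u,v}$ with $\sup_N|S_N(\gamma)|\le C$ uniformly in $u,v$, hence $\bar{M}_c\le C<\infty$.

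The principal obstacle is the intra-block controllability statement: uniformly over the random environment, one must simultaneously bound the sup-norm of partial sums and steer the exit cumulative sum back into $[-C_0,C_0]$, starting from any admissible entry sum. The absence of nonzero atoms is essential precisely here, since the pyramidal construction used above shows that any nonzero atom allows one to engineer rigid curl-free configurations with arbitrarily large potential differences which no path can escape without accumulating a large partial sum.
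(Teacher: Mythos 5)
Your necessity argument is essentially the paper's: the unbounded case via a vertex whose four incident edges all exceed $C$, and the atomic case via a large region in which all edge values equal the atom and the signs make the field the discrete gradient of a pyramidal potential centered at some $u$, so that every path from $u$ out of the region must see a partial sum of order $|a|n$. Both steps are correct (one could quibble that you should say a word on why the telescoping identity $S_k=\psi(v_k)$ only needs the edges with both endpoints in $R_n$, but that is exactly how the paper argues with an $L_1$ ball instead of a box).

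The sufficiency direction, however, has a genuine gap, and the strategy itself cannot work as stated. First, $\bar\Sigma$ is a supremum over \emph{all} pairs $u,v\in\Z^2$, so you need a single constant valid simultaneously for every pair. A good-block/percolation scheme gives blocks that are good with probability close to $1$ but not equal to $1$; almost surely the plane contains bad regions of arbitrarily large diameter, and for $u$ lying deep inside such a region your construction says nothing about the partial sums accumulated before a chain of good blocks is reached (nor does oriented, or even unoriented, percolation of good blocks connect the two prescribed blocks containing $u$ and $v$ in general). A bound that holds only for pairs attached to the good cluster does not bound $\bar M_c$; indeed the paper's remark following the theorem makes exactly this point, that the path of Theorem \ref{primo} must avoid bad random subregions, which is why the block/percolation machinery is the right tool there but not here. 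Second, the intra-block steering lemma you lean on is not established by an intermediate-value argument: the increments achievable by the (countably many) paths inside a block form a discrete random set, not an interval, and $Y_eX_e$ need not even have a continuous law since $\mathcal{L}$ is allowed an atom at $0$. What the absence of nonzero atoms really provides, and what the paper uses, is the almost sure \emph{simultaneous} property that every unit $4$-cycle either has all its edge variables equal to zero or has nonzero sum. With this, the paper builds the path from $u$ to $v$ deterministically along the straight line joining them: before each step whose contribution would push the running sum the wrong way, it concatenates the adjacent unit cycle, traversed in the direction that moves the sum toward zero, as many times as needed; since each cycle sum lies in $[-4\bar C,4\bar C]$ with $\bar C=\sup\{|x|:x\in\mathcal{X}\}$, all partial sums stay within $6\bar C$, uniformly in $u,v$. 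That local, everywhere-valid cancellation mechanism is the missing idea in your proposal.
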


For $p_o, p_v \in (0,1)$, comparing Theorem \ref{primo} with Theorem \ref{uniforme}, one can see that the self-avoiding path, whose existence is ensured by the former theorem, has to avoid some ``bad" random subregions of
$\Z^2$, at least if $\mathcal {L}$ has unbounded support or it has atoms different from zero.

\section{Preliminary lemmas}

This section is devoted to establish some general results concerning sums of independent
 random variables which will be fundamental in the following. Since they could have independent interest we present them in
a more general context than needed.

\begin{lemma}\label{CLT}
Let $\phi: \N \to \mathbb{R}_+$ be a function with the property $\phi (N)=o( \sqrt{N})$, as $N \to \infty$.
Let  $(X_n : n \in \N)$ be a sequence of i.i.d. random variables, and let $(Z_n   : n \in \N)  $
be an independent  sequence of independent sign variables with $r_n =\mathbb{P} ( Z_n =+1 ) $.
If
\begin{itemize}

  \item[a.]  the random variable $X_1$  is  a.s. equal to a non zero constant
   and
   \begin{equation}\label{varia}
    \liminf_{N\to \infty} \frac{1}{N}\sum_{k=1}^N r_k (1-r_k )  >0,
\end{equation}
\end{itemize}
or
\begin{itemize}
\medskip

   \item[b.] the random variable $X_1$ is not a.s. constant,

\end{itemize}
 then
\begin{equation} \label{bo}
\lim_{N\to \infty}     \mathbb{P} (  |  \sum_{k=1}^{N} Z_k X_k | > \phi_N) =1.
\end{equation}
\end{lemma}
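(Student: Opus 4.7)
The plan is to prove the stronger anti-concentration statement
$Q_N(\lambda) := \sup_{x \in \R} \mathbb{P}(x \le \sum_{k=1}^N Z_k X_k \le x + \lambda) = O(1/\sqrt{N})$
for a fixed small $\lambda > 0$. Once this is in hand, covering $[-\phi_N,\phi_N]$ with $\lceil 2\phi_N/\lambda \rceil$ intervals of length $\lambda$ gives $\mathbb{P}(|\sum_{k=1}^N Z_k X_k| \le \phi_N) \le (2\phi_N/\lambda + 1)\,Q_N(\lambda) = O(\phi_N/\sqrt{N})$, which tends to $0$ because $\phi_N = o(\sqrt{N})$; this is exactly \eqref{bo}.

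The key tool is the Kolmogorov--Rogozin inequality: for independent random variables $Y_1,\ldots,Y_N$ and any $\lambda > 0$,
$\sup_x \mathbb{P}(x \le \sum_{k=1}^N Y_k \le x + \lambda) \le C \big(\sum_{k=1}^N (1 - Q_{Y_k}(\lambda))\big)^{-1/2}$,
where $Q_{Y_k}(\lambda) := \sup_y \mathbb{P}(y \le Y_k \le y + \lambda)$ and $C$ is a universal constant. I would apply it to $Y_k = Z_k X_k$, which are independent because the two sequences $(X_n)$ and $(Z_n)$ are independent of each other and each has independent entries.

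In case (a), $Y_k = c Z_k \in \{-c,+c\}$, and whenever $\lambda < 2|c|$ no interval of length $\lambda$ contains both $+c$ and $-c$, so $Q_{Y_k}(\lambda) = \max(r_k,1-r_k)$ and $1 - Q_{Y_k}(\lambda) = \min(r_k,1-r_k)$. Using $r(1-r) = \min(r,1-r)\cdot\max(r,1-r) \le \min(r,1-r)$, hypothesis \eqref{varia} provides $c_0 > 0$ such that $\sum_{k=1}^N (1 - Q_{Y_k}(\lambda)) \ge c_0 N$ for $N$ large, and the Kolmogorov--Rogozin bound gives $Q_N(\lambda) = O(1/\sqrt{N})$.

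In case (b), non-degeneracy of $X_1$ entails $\lim_{\lambda \downarrow 0} Q_{X_1}(\lambda) = \sup_x \mathbb{P}(X_1 = x) < 1$, so one may fix $\lambda > 0$ small enough that $Q_{X_1}(\lambda) \le 1 - \eta$ for some $\eta > 0$. Since the law of $Z_k X_k$ is the mixture of the law of $X_k$ with weight $r_k$ and of $-X_k$ with weight $1-r_k$, the subadditivity of the concentration function under mixtures together with its invariance under reflection yield $Q_{Y_k}(\lambda) \le Q_{X_1}(\lambda) \le 1 - \eta$ uniformly in $k$. Hence $\sum_{k=1}^N (1 - Q_{Y_k}(\lambda)) \ge N\eta$, and Kolmogorov--Rogozin again delivers $Q_N(\lambda) = O(1/\sqrt{N})$. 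The only real obstacle is picking a tool uniform in the two regimes: a classical CLT/Berry--Esseen argument would dispatch case (a) directly but requires moment assumptions on $X_1$ that are absent in case (b), so the Kolmogorov--Rogozin inequality, which needs only a lower bound on $\sum(1 - Q_{Y_k}(\lambda))$, is the natural choice.
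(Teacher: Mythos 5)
Your proof is correct, but it takes a genuinely different route from the paper. The paper argues the two cases separately: in case (a) it applies the Lyapunov CLT to $\sum_k Z_k$, and in case (b) it truncates the $X_k$'s at a level $K$, conditions on the random set $I_N$ of indices exceeding the cutoff, controls $|I_N|$ by Chernoff's bound, and applies the Berry--Esseen inequality to the remaining bounded summands, whose conditional variance is at least $N\sigma_K^2/3$, so that the interval of width $2\phi_N$ is asymptotically negligible because $\phi_N=o(\sqrt N)$. You instead bound the L\'evy concentration function of the whole sum via the Kolmogorov--Rogozin inequality and finish with a covering of $[-\phi_N,\phi_N]$ by $O(\phi_N/\lambda)$ intervals of length $\lambda$; both cases then reduce to a uniform lower bound on $\sum_k\bigl(1-Q_{Y_k}(\lambda)\bigr)$, which you verify correctly (in (a) via $1-Q_{Y_k}(\lambda)=\min(r_k,1-r_k)\geq r_k(1-r_k)$ for $\lambda<2|c|$, using \eqref{varia}; in (b) via the mixture/reflection bound $Q_{Z_kX_k}(\lambda)\leq Q_{X_1}(\lambda)$ together with the standard fact that $Q_{X_1}(\lambda)\to\sup_x\mathbb{P}(X_1=x)<1$ as $\lambda\downarrow 0$). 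Your route is more unified, avoids the truncation that the paper needs to make Berry--Esseen applicable in the absence of moment assumptions, and yields the same kind of quantitative bound $O(\phi_N/\sqrt N)$ for $\mathbb{P}(|\sum_{k\leq N}Z_kX_k|\leq\phi_N)$; its only cost is invoking Rogozin's concentration inequality, a classical but less elementary tool than the CLT, Chernoff and Berry--Esseen ingredients the paper relies on.
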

\begin{proof}
a. Suppose $X_1$ is a non zero constant a.s.. Since \eqref{varia} holds we can
apply the Lyapunov central limit theorem to $\sum  _{k=1}^{N} Z_k$ which leads to \eqref{bo}.

\noindent
b. Let us choose 
a cutoff $K >0 $ large enough to ensure  that
 $\pi_K : = \mathbb{P}  (  |   X_1 | <  K) > \frac{1}{2}$ and $ \sigma_K^2 :=Var (  X_1 \mathbf{1}_{ \{  |X_1| <K  \}  } ) >0$.
 Let us define the random set of indices corresponding to the $X_i$'s which exceed the cutoff
 $$
 I_N = \{ i \leq N : |X_i | \geq K       \}
 $$
so that the random variable $|I_N|$ has the binomial distribution $Bin (N, 1- \pi_K)$.  Finally define $R_N:=\sum_{k\in I_N }       Z_k   X_k$ and $G_N:=\sum_{k\notin I_N, k \leq N }   Z_k        X_k$. Now notice that
$$
 \mathbb{P} (  |  \sum_{k=1}^{N}    Z_k        X_k | \leq  \phi_N) =
 \mathbb{P} (  |  R_N+G_N   | \leq  \phi_N) \leq
$$
$$
\leq \mathbb{E} (\mathbb{P} (  |  R_N+G_N   | \leq  \phi_N     |   I_N, R_N )1_{\{|I_N|\leq \frac {2N}{3}  \}}) +\mathbb{P}  (|I_N| > 2N/3) \leq
$$
\begin{equation} \label{insup}
\leq \sup  \left \{      \mathbb{P} (  |  c+G_N  | \leq  \phi_N     |   I_N = I )
  : c\in \R, 0\leq |I| \leq 2N/3   \right  \} +   \mathbb{P}  (|I_N| > 2N /3).
\end{equation}
The last inequality is a consequence of the fact that  the random variables $R_N$ and $G_N$ are independent, conditionally to $I_N$.

Using Chernoff's theorem (see e.g. \cite{denHollander}) for the sequence $|I_N| $ one has that there exists a  positive constant $\lambda >0 $ such that
$ \mathbb{P}  (|I_N| > 2N /3) \leq \exp{  ( -\lambda   N) }$,  for any $N \in \mathbb{N}$.

As far as the first summand in \eqref{insup} is concerned, if we replace the distribution of $G_N$ conditional to $I_N$ with a Gaussian one with same mean and variance, we can bound the error by using the Berry-Esseen inequality (see \cite{Berry,Esseen}). In the Gaussian term the supremum w.r.t. $c$ is achieved by $c=-\mathbb {E}(G_N)$. Moreover it is easy to obtain that
$$
Var (G_N|I_N=I) \geq  (N- |I| ) \sigma_K^2 \geq \frac {N}{3}\sigma_K^2
$$
as long as $|I|\leq \frac {2N}{3}$, irrespectively of $(r_n)_{n \in \N}$. Moreover
$\mathbb{E}(| X_k \mathbf{1}_{\{ |X_k | <K \}} |^3) \leq K^3   $.

Altogether we obtain that the r.h.s. of \eqref{insup} can be bounded by
\begin{equation} \label{insup2}
     2\left [  \Phi \left (  \frac{\phi_N}{    \sigma_K  \sqrt{N  /3  }}      \right )  -
      \frac{1}{2}   \right ]     + \frac{   2   K^3     }{ \sigma_K^3 \sqrt{N/3} }   +
        \exp ( - \lambda N).
\end{equation}
We conclude the proof by observing that the three summands
in \eqref{insup2} go to zero when $N $ increases to infinity.
\end{proof}

Before stating the next lemma we need to recall the definition and some of the main properties of the total variation distance of two probability
measures $\mu$ and $\nu $ on the same measurable space $ (\Omega, \mathcal{F})$.

\begin{definition} \label{defTV}
The total variation distance between probability measures $\mu$ and $\nu $ on $ (\Omega, \mathcal{F})$ is defined as
\begin{equation} \label{definisceTV}
|| \mu - \nu||_{TV} = \sup_{A \in \mathcal{F}} \mu (A)     -\nu (A).
\end{equation}
\end{definition}

Here are the properties of total variation we are going to use in the sequel (see e.g. \cite{Lindvall}):

\begin{itemize}
\item[i.]
\begin{equation} \label{definisceTVi}
|| \mu - \nu||_{TV} = \sup_{0\leq f \leq 1} \mathbb {E}(f(X))-\mathbb {E}(f(Y)),
\end{equation}
where $f$ is a measurable function on $ (\Omega, \mathcal{F})$, and $X$ and $Y$ are random variables with laws $\mu$ and $\nu$, respectively;
\item[ii.] For $X$ and $Y$ random variables on the \emph {same} probability space, with laws $\mu$ and $\nu$, respectively
\begin{equation} \label{definisceTVii}
|| \mu - \nu||_{TV}\leq \mathbb {P}(X \neq Y)
\end{equation}
and the equality is achieved by some choice of $X$ and $Y$ (\emph {maximal coupling} of $\mu$ and $\nu$);
\item[iii.]
If $\mu \ll \lambda$, $\nu \ll \lambda$, then
\begin{equation} \label{definisceTViii}
|| \mu - \nu||_{TV}= \mu (\tilde A)-\nu (\tilde A)
\end{equation}
where
$$
\tilde A=\left \{\omega: \frac {d\mu}{d\lambda} (\omega) \geq \frac {d\nu}{d\lambda}(\omega) \right \}.
$$
\item[iv.] When $\Omega=\Z$, then
\begin{equation} \label{definisceTViv}
|| \mu - \nu||_{TV}=\frac {1}{2}\sum_{x \in \Z} |\mu(\{x\})-\nu(\{x\})|.
\end{equation}
\end{itemize}

The following form of the local central limit theorem will be of interest later.

 \begin{lemma} \label{TLL}
Let $(Z_i : i \in \N)$ be independent sign variables
with $r_i = \mathbb{P} (Z_i = +1)$, for $i \in \N$. Let
\begin{equation}\label{tienne}
 T_N=\sum_{i=1}^N Z_i,
\end{equation}

and
\begin{equation}\label{med-var}
a_N :=  \mathbb E (T_N) = \sum_{i=1}^N (2r_i -1) , \,\,\,     b_N^2 :=   \text {Var}(T_N)= 4\sum_{i=1}^N  r_i    (1- r_i )    .
\end{equation}
Suppose that
\begin{equation}\label{varianza}
 \lim_{N \to \infty}   \frac{a_N}{\sqrt{N}} =0, \,\,\,\,\, \liminf_{N \to \infty} \frac{ b_N^2}{N}  >0.
\end{equation}
Then
\begin{equation}\label{TV-gaussiana}
  \lim_{r \to \infty} || \mathcal{L}   (  T_N   ) -\mathcal{L} (T_{N,\sigma}) ||_{TV} =0  ,
\end{equation}
where $T_{N,\sigma}$ has the symmetric law
\begin{equation}\label{gauss}
  \mathbb{P} (T_{N,\sigma}=k)=\Phi \left( \frac {k+1}{b_N}\right)-\Phi \left( \frac {k-1}{b_N}\right ),
\end{equation}
for $k \in L_N = 2 \mathbb{Z} + (N \mod (2))$,  $ \Phi $ being the standard Gaussian distribution function.
\end{lemma}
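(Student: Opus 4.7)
The strategy is to combine a local central limit theorem for $T_N$ with a translation argument for discretized Gaussians. I would first introduce an auxiliary random variable $T_{N,a_N}$ on $L_N$ with PMF $p_a(k):=\Phi((k+1-a_N)/b_N)-\Phi((k-1-a_N)/b_N)$ and split by the triangle inequality:
\[ \|\mathcal{L}(T_N)-\mathcal{L}(T_{N,\sigma})\|_{TV} \le \|\mathcal{L}(T_N)-\mathcal{L}(T_{N,a_N})\|_{TV} + \|\mathcal{L}(T_{N,a_N})-\mathcal{L}(T_{N,\sigma})\|_{TV}. \]
For the second summand, the intervals $\{((k-1)/b_N,(k+1)/b_N):k\in L_N\}$ tile $\R$ (spacing $2$ on $L_N$), so property \eqref{definisceTViv} of total variation yields
\[ \|\mathcal{L}(T_{N,a_N})-\mathcal{L}(T_{N,\sigma})\|_{TV} \le \tfrac{1}{2}\int_{\R}|\phi(x-a_N/b_N)-\phi(x)|\,dx, \]
with $\phi$ the standard normal density; since the hypotheses \eqref{varianza} imply $a_N/b_N\to 0$, this tends to $0$ by translation continuity in $L^1(\R)$.

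For the first summand I would prove a quantitative local CLT via Fourier inversion. Since $L_N$ has spacing $2$,
\[ \mathbb{P}(T_N=k)=\frac{1}{\pi}\int_{-\pi/2}^{\pi/2}e^{-ikt}\phi_{T_N}(t)\,dt, \qquad k\in L_N, \]
with $\phi_{T_N}(t)=\prod_{j=1}^N[\cos t+i(2r_j-1)\sin t]$. Setting $\psi_N(t):=e^{-ia_Nt}\phi_{T_N}(t)$, the characteristic function of $T_N-a_N$, and $g_N(k):=(2/(b_N\sqrt{2\pi}))\,e^{-(k-a_N)^2/(2b_N^2)}$, the substitution $s=b_N t$ gives, uniformly in $k$,
\[ b_N(\mathbb{P}(T_N=k)-g_N(k)) = \frac{1}{\pi}\int_{-\pi b_N/2}^{\pi b_N/2}e^{-i(k-a_N)s/b_N}[\psi_N(s/b_N)-e^{-s^2/2}]\,ds + O(e^{-c b_N^2}). \]
I would bound the main integral by $\int_{-\pi b_N/2}^{\pi b_N/2}|\psi_N(s/b_N)-e^{-s^2/2}|\,ds$ and apply dominated convergence. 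Pointwise, $\psi_N(s/b_N)\to e^{-s^2/2}$ by the Lindeberg CLT (trivially satisfied since $|Z_j|\le 1$ and $b_N^2\to\infty$ by \eqref{varianza}). An integrable dominator is obtained by splitting the range: on $|s|\le\eta b_N$ (for small $\eta$), the identity $|\phi_{T_N}(t)|^2=\prod_j(1-4r_j(1-r_j)\sin^2 t)$ combined with $\sin^2 u\ge(2/\pi)^2 u^2$ yields $|\psi_N(s/b_N)|\le e^{-s^2/4}$; on $\eta b_N\le|s|\le\pi b_N/2$ the same identity with $\sin^2(s/b_N)\ge\sin^2\eta$ and $\sum_j 4r_j(1-r_j)=b_N^2\gtrsim N$ gives the exponentially small bound $|\psi_N(s/b_N)|\le e^{-c'N}$. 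A routine Taylor expansion of $\Phi$ also yields $\max_{k\in L_N}|b_N(p_a(k)-g_N(k))|=O(1/b_N^2)$, so that $\varepsilon_N:=\max_{k\in L_N}|b_N(\mathbb{P}(T_N=k)-p_a(k))|\to 0$.

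Finally, to pass from this uniform pointwise bound to a TV estimate, I would split $L_N$ into $\{|k-a_N|\le M b_N\}$, of cardinality $O(M b_N)$, where the contribution to $\sum_k|\mathbb{P}(T_N=k)-p_a(k)|$ is $O(M\varepsilon_N)$, and its complement, where Chebyshev's inequality (applied to both $T_N$ and $T_{N,a_N}$, each with variance $b_N^2$) bounds the sum by $O(1/M^2)$; choosing $M=\varepsilon_N^{-1/3}$ makes both contributions vanish. The main obstacle is the dominated convergence step in the Fourier domain, i.e.\ the construction of an integrable dominator for $|\psi_N(s/b_N)|$ uniformly in $N$ on $[-\pi b_N/2,\pi b_N/2]$. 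This is precisely where the variance hypothesis $\liminf b_N^2/N>0$ is essentially used: it prevents $\phi_{T_N}$ from being so flat (close to $1$) on a significant band of frequencies as to spoil the Gaussian approximation.
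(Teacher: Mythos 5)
Your proposal is correct, and its skeleton matches the paper's: both pass through the auxiliary discretized Gaussian centered at $a_N$ (your $T_{N,a_N}$ is the paper's $\tilde T_N$), both convert a uniform local estimate into a total-variation bound by splitting $L_N$ into a bulk of $O(Mb_N)$ lattice points plus a Gaussian/Chebyshev tail, and both finish by removing the centering $a_N$ using $a_N/b_N\to 0$. The genuine difference is in how the key local estimate is obtained. The paper does not prove a local CLT at all: it invokes Theorem 1.1 of Davis--McDonald \cite{BM1995}, whose quantity $\delta_N=b_N\sup_k|\mathbb{P}(T_N=k)-\mathbb{P}(\tilde T_N=k)|=o(1)$ is exactly your $\varepsilon_N$, and then chooses the bulk radius $b_N/\sqrt{\delta_N}$ (your $M=\varepsilon_N^{-1/3}$ plays the same role). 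You instead derive $\varepsilon_N\to 0$ from scratch by Fourier inversion on the span-$2$ lattice, the identity $|\phi_{T_N}(t)|^2=\prod_j\bigl(1-4r_j(1-r_j)\sin^2 t\bigr)$, and dominated convergence, which is where the hypothesis $\liminf b_N^2/N>0$ enters for you just as it enters the cited theorem's hypotheses for the paper. Your treatment of the recentering step (bounding the TV of the two discretized Gaussians by $\tfrac12\int|\phi(x-a_N/b_N)-\phi(x)|\,dx$ via the tiling of $\R$ and $L^1$-continuity of translation) also differs in technique from the paper, which compares the two continuous Gaussians $N(a_N,b_N^2)$ and $N(0,b_N^2)$ through property iii.\ of the total variation distance; both are valid. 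Your route buys self-containedness (no external local CLT) at the cost of length; the paper's is shorter by citation. One cosmetic slip: from $\sin^2 u\geq (2/\pi)^2u^2$ you get the dominator $e^{-2s^2/\pi^2}$, not $e^{-s^2/4}$ (note $2/\pi^2<1/4$); either constant gives an integrable Gaussian dominator, so nothing breaks, but you should state a consistent bound (e.g.\ use $\sin^2 u\geq u^2/2$ for $|u|\leq\eta$ with $\eta$ small, or keep $e^{-2s^2/\pi^2}$ on all of $|s|\leq \pi b_N/2$, which in fact makes the two-range split unnecessary).
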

\begin{proof}
First notice that the condition \eqref{varianza} implies the Lyapunov condition for the validity of
the CLT for the sequence $ (Z_i : i \in \N) $. Moreover, the hypotheses of  Theorem 1.1 in \cite{BM1995} hold implying that the local
central limit theorem holds true. This can be written in the convenient form  (see formula (1.3) in \cite{BM1995})
\begin{equation}\label{BM1.3}
\delta_N =b_N \sup_{k \in I_N}       |    \mathbb{P}  (   T_N   =k    )   -
    \mathbb{P} (\tilde T_N=k)     | =o(1),
\end{equation}
where  $\tilde T_N$ has the law
$$
\mathbb{P} (\tilde T_N=k)=\Phi \left( \frac {k+1-a_N}{b_N}\right)-\Phi \left( \frac {k-1-a_N}{b_N}\right ),
\text{ for } k \in L_N .
$$

Next
\begin{equation*}\label{primaserie}
|| \mathcal{L}   ( \tilde T_N   ) -\mathcal{L} (T_N) ||_{TV}=\sup_{A  \subset I_N } \, \left [\sum_{k \in A}
\left (\mathbb{P}(\tilde T_N=k)-\mathbb{P}(T_N=k)     \right )      \right ]
\end{equation*}
\begin{equation}\label{secondaserie}
  \leq \frac {1}{2} \sum_{k \in I_N : |k -a_N | \leq \frac{b_N}{ \sqrt{\delta_N} }}
\left |\mathbb{P}(\tilde T_N=k)-\mathbb{P}(T_N=k)     \right|    + \frac {1}{2} \sum_{k \in I_N : |k -a_N | > \frac{b_N}{ \sqrt{\delta_N} }}
\mathbb{P}(\tilde T_N=k)
\end{equation}
\begin{equation}\label{terzaserie}
  \leq C \left (\sqrt {\delta_N} + 2\Phi \left (-\frac{1}{\sqrt{ \delta_N}} \right    ) \right ),
\end{equation}
for some constant $C$, which goes to zero when $N$ goes to infinity (notice that in the next to the last inequality we used property iv. of the total variation distance).
It remains to prove that
\begin{equation}\label{vargauss}
\lim_{N \to \infty} ||\mathcal{L}(\tilde T_N)-\mathcal{L}(T_{N,\sigma})||_{TV}=0.
\end{equation}
The total variation distance in the above display can be upper bounded by the total variation distance between the Gaussian distributions $N(a_N,b_N^2)$ and $N(0,b_N^2)$, which, using property i. of the total variation distance, is clearly equal to that between their scale multiples $N(\frac {a_N}{b_N},1)$ and $N(0,1)$. Using property iii., the latter can be bounded from above by
$$
\frac {1}{2}\left [\Phi \left (\frac {1}{2} \frac {|a_N|}{b_N}\right)-\Phi \left (-\frac {1}{2}  \frac {|a_N|}{b_N} \right )\right ]     .
$$
Since the assumptions \eqref{varianza} clearly imply that $a_N/b_N$ tends to $0$ as $N \to \infty$, this proves the desired relation \eqref{vargauss}.
\end{proof}

\section{Tessellations of the euclidean plane}\label{problema}

We will construct a tessellation $\calT (a_1, a_2 )$ of the Cartesian plane $\mathbb{R}^2$
depending on two integer parameters $a_1\geq a_2 \geq 0$, with $a_1>0$.

The tessellation will be  obtained by translations of the basic parallelogram $R_{a_1,a_2}(O)$ with vertices
\begin{equation}\label{4punti}
  A_1=(a_1,-a_2),\,\,A_2=(a_1,-a_2+3m+1),\,\, A_3=(-a_1,a_2+3m+1),\,\,A_4=(-a_1,a_2),
\end{equation}
where $m$ is a positive integer to be suitably chosen. It is immediately verified that the $y$-axis cuts
the parallelogram into two equal sides $R_{a_1,a_2}^l(O)$ and $R_{a_1,a_2}^r(O)$ (the left and the right parallelogram, respectively). The tessellation is then defined as
\begin{equation}\label{sitiblocchi}
R_{a_1,a_2} (\mathbf {b})    =        R_{a_1,a_2} (O) + b_x A_2 +  b_y A_3,
\end{equation}
for $\mathbf {b}=(b_x, b_y) \in \Z^2$. As observed before each $R_{a_1,a_2} (\mathbf {b})$ is cut into the two equal sides $R_{a_1,a_2}^l (\mathbf {b}) $ and $R_{a_1,a_2}^r (\mathbf {b}) $, obtained by translating $R_{a_1,a_2}^l (O) $ and $R_{a_1,a_2}^r (O) $ with the vector $b_xA_2+b_yA_3$, respectively.

For the sequel we need to
define an oriented graph structure on the tessellation $\calT (a_1, a_2 )$, by putting oriented edges from each parallelogram $R_{a_1,a_2}(\mathbf {b})$, with $\mathbf {b}=(b_x,b_y)$ to the parallelograms $R_{a_1,a_2}(b_x+1,b_y)$ and $R_{a_1,a_2}(b_x,b_y+1)$.  This structure is clearly isomorphic to $\vec {G}_2=(\mathbb {Z}_2, \vec {E}_2)$, where $\vec {E}_2$ is defined in \eqref{e2}. It is useful to associate to these edges the parallelograms $R_{a_1,a_2}^r(\mathbf {b})$ and $R_{a_1,a_2}^l(\mathbf {b})$, respectively (see Figure 1).

\begin{figure}[ht]
\centering
\includegraphics[width=90mm]{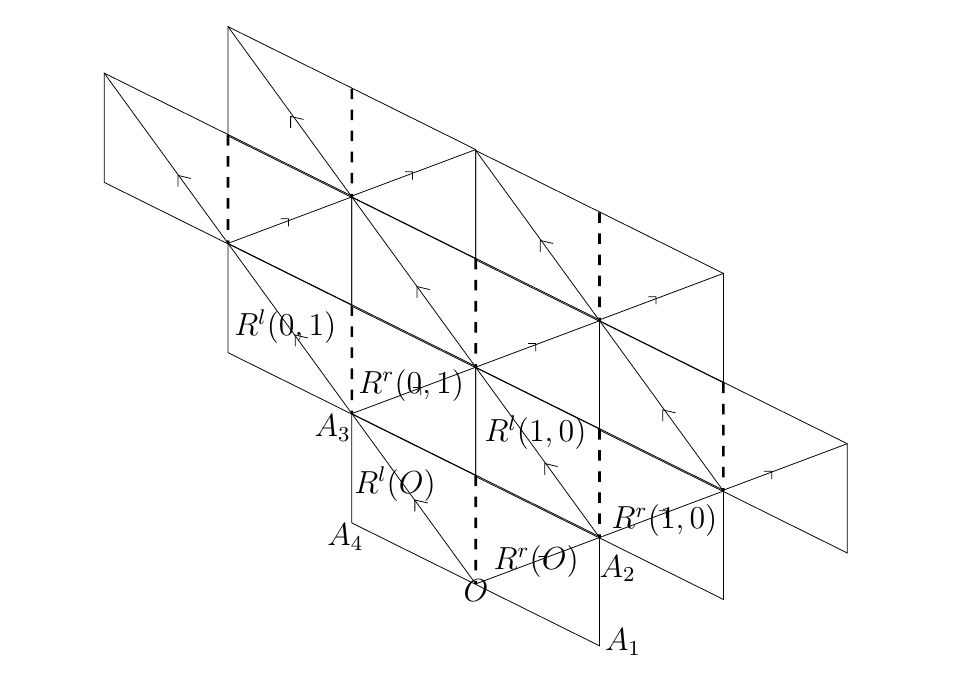}
 \caption{\small{Tessellation and oriented graph structure.
   }}
\end{figure}

Recall that $\frac{1}{2} \leq p_o \leq p_v \leq 1 $.
For $p_v \neq \frac {1}{2}$ define
 \begin{equation}\label{vengono}
   \rho  =  \frac{2p_o-1}{2p_v-1}    \in [0,1]           .
 \end{equation}
When $p_o=p_v = \frac{1}{2}$ set $\rho=0$. Now, to each value of $\rho$ we associate a \emph{ sequence} of tessellations $\mathcal{T}(a_1(n), a_2(n) )$, $n \in \mathbb{N}$.

When $\rho \in \mathbb {Q}_+$ we set $a_1(n)=na_1^*$ and $a_2(n)=na_2^*$, where $a_1^*$ and $a_2^*$ are coprime such that $\rho=\frac {a_2^*}{a_1^*}$. We extend this definition for $\rho=0$ setting in this case $a_1^*=1$ and $a_2^*=0$.

When $\rho \notin \mathbb {Q}$ we take increasing sequences of positive integers $a_1(n)$ and $a_2(n)$ with the property
\begin{equation}\label{acappa}
  \left |\frac {a_2(n)}{a_1(n)}-\rho\right | \leq \frac {1}{a_1(n)^2},
\end{equation}
as ensured by Dirichlet's approximation theorem (see e.g. \cite{Lang}).

\medskip

For $n \in \mathbb{N}$, $u \in \{r,l\}$ and $\mathbf{b}  \in \Z^2$, we denote by  $B^u_{a_1(n),a_2(n)} (  \mathbf{b}   )$  the set of edges (seen as closed segments) belonging entirely to the parallelogram $R^u_{a_1(n),a_2(n)}(  \mathbf{b}   )$. The family of \emph{blocks}
$$\{  B^u_{a_1(n),a_2(n)} (  \mathbf{b}   ) : u \in \{r,l\}, \,\,\mathbf{b}  \in \Z^2  \}
$$
is not a partition of $E_2 $, for two reasons. First, in general there exists  horizontal edges (again seen as closed segments) that do not lie entirely
in a parallelogram of the tessellation so they are excluded from any block. Second, there are vertical edges in common to two adjacent blocks.
We denote by $ \text{int} B^u_{a_1(n),a_2(n)}(  \mathbf{b}   )$ the set of edges which, seen as open segments, are subsets of the interior of  $R^u_{a_1(n),a_2(n)}(  \mathbf{b}   )$. For $u \in \{r,l\}$ and $\mathbf{b} \in \Z^2$, these sets are pairwise disjoint.

 Let us consider a path $\gamma_0^r(n)=( v_0, e_1,v_1,  e_2, \ldots,v_{\ell(n)-1},  e_{\ell(n)} , v_{\ell(n)} )$, from $v_0=O$ to $v_{\ell(n)}=A_1(n)=(a_1(n),-a_2(n))$ of $\Z^2$, with the following properties:
\begin{itemize}
  \item $\gamma_0^r(n)$ is \emph {decreasing}, in the sense that horizontal edges are crossed to the right and vertical edges downward (therefore its length $\ell(n)=a_1(n)+a_2(n)$);
  \item $v_1=(1,0)$ and $v_{\ell(n)-1}=(a_1(n)-1,-a_2(n))$ (therefore the first and the last edge of $\gamma_0^r(n)$ are horizontal);
  \item the entire path lies within the stripe $|y+\frac {a_2(n)}{a_1(n)}x|\leq 1$ in the cartesian plane with coordinates $x$ and $y$.
\end{itemize}

We are going to apply Lemma \ref{TLL} to the random variable $  T(\gamma_0^r(n))$. This random variable has the same law of
\begin{equation}\label{stessalegge}
  \sum_{i=1}^{a_1  (n)  } Z_i'    - \sum_{i=1}^{a_2  (n)  } Z''_i
\end{equation}
where $ (Z'_i )_{i \in \N} $ and $ (Z''_i )_{i \in \N} $ are two independent i.i.d. sequences of sign variables, with
\begin{equation}\label{bernu}
  \mP ( Z'_i = +1 ) = p_o , \,\,\,   \mP ( Z''_i = +1 ) = p_v  .
\end{equation}
As a consequence, when $n$ changes, since $ a_1(n)$ and $a_2 (n)$ are both increasing in $n$, it is possible to regard $T(\gamma_0^r(n))$ as a (sub)-sequence (of a sequence) of the form \eqref{tienne}. Now
\begin{equation}\label{mediapiccola}
\mathbb {E} (T(\gamma_0^r(n)))=a_1(n)(2p_o-1)-a_2(n)(2p_v-1)=a_1(n)(2p_v-1)\left (\rho-\frac {a_2(n)}{a_1(n)}\right )
\end{equation}
by \eqref{stessalegge}.
For $\rho \in \mathbb {Q}$ this is always equal to zero. For $\rho \notin \mathbb {Q}$ instead, using \eqref{acappa}, we get
\begin{equation}\label{boundmean}
\left |\mathbb {E} (T(\gamma_0^r(n)))\right | \leq \frac {1}{a_1(n)}\to 0
\end{equation}
as $n \to \infty$.   
 Indeed, the choice of the basic parallelogram and of the path $\gamma_0^r(n)$ is made to ensure this 
kind of  ``asymptotic unbiasedness". Moreover
\begin{equation}\label{varianzagrande}
\text {Var} (T(\gamma_0^r(n)))=4[a_1(n)p_0(1-p_0)+a_2(n)p_v(1-p_v)]=O(a_1(n)),
\end{equation}
unless $p_o=1$ (in which case also $p_v=1$, and $\text {Var} (T(\gamma_0^r(n)))=0$).

For $p_o<1$, Lemma \ref{TLL} and the property ii. of total variation
justifies the following maximal coupling construction: an auxiliary random variable $T_{\sigma} (\gamma_0^r(n))$ can be introduced, with a \emph {symmetric} law (recall \eqref{gauss}), such that the event
\begin{equation} \label{couplingevent}
H_0^r(n)=\{T_{\sigma}(\gamma_0^r(n))=T(\gamma_0^r(n))\}
\end{equation}
is realized with a probability which tends to $1$ as $n \to \infty$. For $p_o=p_v=1$, being $a_1(n)=a_2(n)=n$, $T(\gamma_0^r(n))$ has already a symmetric law, so one can take $T_{\sigma}(\gamma_0^r(n))=T(\gamma_0^r(n))$, in which case $H_0^r(n)$ is the entire sample space.

Next observe that the random variable
\begin{equation} \label{contador}
Q(\gamma_0^r(n))=\frac {l(n)+T(\gamma_0^r(n))}{2}
\end{equation}
counts the number of $+$ signs along the path $\gamma_0^r(n)$. Moreover the sum  $S(\gamma_0^r(n))$ of the path $\gamma_0^r(n)$ can be expressed, preserving the law, as
\begin{equation}\label{enfer}
S(\gamma_0^r(n))=\sum_{i=1}^{Q(\gamma_0^r(n))} X_{e_i}-\sum_{i=Q(\gamma_0^r(n))+1}^{l(n)}X_{e_i}=:f_n(T(\gamma_0^r(n));X_{e_1},\ldots,X_{e_{\ell(n)}})
\end{equation}
where $\{e_1,\ldots,e_{l(n)}\}$ are the edges of $\gamma_0^r(n)$. On the event $H_0^r(n)$ this sum coincides with
\begin{equation}\label{enfer2}
S_{\sigma}(\gamma_0^r(n))=f_n(T_{\sigma}(\gamma_0^r(n));X_{e_1},\ldots,X_{e_{\ell(n)}}),
\end{equation}
which is immediately verified to have a symmetric law.

Next we are going to define suitable vertical translations of the path $\gamma_0^r(n)$, namely
\begin{equation}\label{adestra}
\gamma_i^r(n)=\gamma_0^r(n)+(2+3(i-1))(0,1),\,\,\, i=1,\ldots,m.
\end{equation}
All these paths run from the ``left vertical" boundary to the ``right vertical" (see Figure 2) boundary of the parallelogram $R^r_{a_1(n),a_2(n)}(O)$, using only edges belonging to int$B^r_{a_1(n),a_2(n)}(O)$; moreover they are disjoint by construction, which implies that the sums along each of them are independent. 

\begin{figure}[ht]
\centering
\includegraphics[width=90mm]{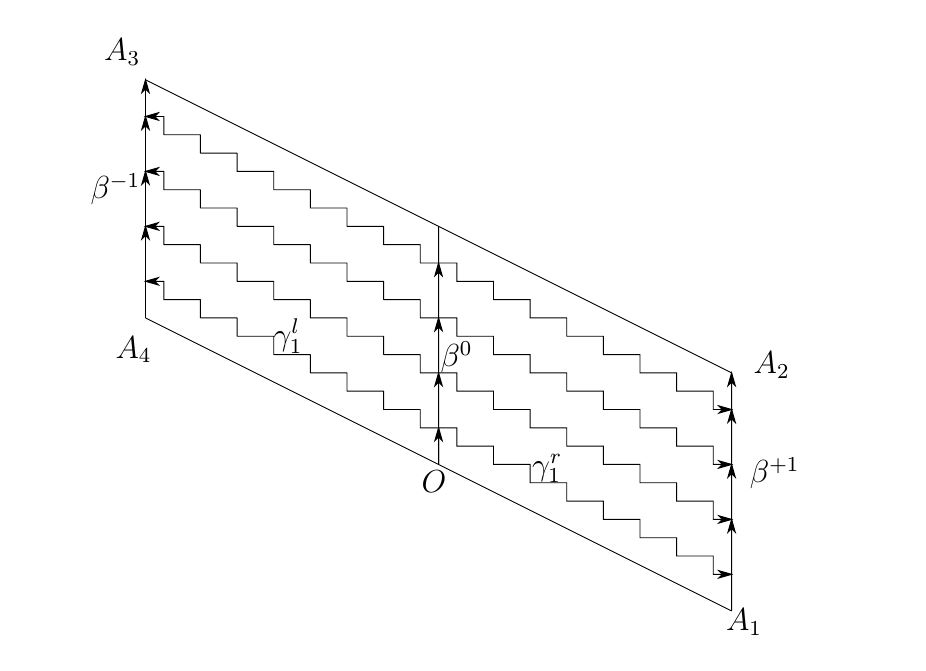}
   \caption{\small{The paths $\gamma^r_i$ and $\gamma^l_i $, for $i=1,\ldots , 4$, $\beta^0, \beta^{\pm 1}$.
   }}
\end{figure}

For any of these paths we repeat the same construction made for $i=0$, getting i.i.d.
\begin{equation}\label{camini}
T(\gamma_i^r(n)), S(\gamma_i^r(n)), T_{\sigma}(\gamma_i^r(n)), S_{\sigma}(\gamma_i^r(n)), 
\mathbf{1}_{H_i^r(n)} ,  \,\,\, i=1,\ldots,m,
\end{equation}
where 
$$
H_i^r(n)=\{T_{\sigma}(\gamma_i^r(n))=T(\gamma_i^r(n))\},
$$ 
that we  call the ``symmetry event" for the path $ \gamma_i^r(n) $  inside the  block $ B^r_{a_1(n),a_2(n)}(O)$.

For later use we need some book-keeping about the minimum and the maximum among the sums $S(\gamma_i^r(n))$'s. So let us define
\begin{equation}\label{indicer}
  i_r = \min \{ i =1, \ldots ,  m: S(\gamma_i^r(n)) =\min_{j=1,\ldots,m} S(\gamma_j^r(n))\},
\end{equation}
\begin{equation}\label{indicer2}
  j_r = \min \{      i =1, \ldots ,  m: S(\gamma_i^r(n)) =\max_{j=1,\ldots,m} S(\gamma_j^r(n))\}.
\end{equation}
and 
\begin{equation}\label{contributo?}
\gamma^r_-(n)=\gamma_{i_r}^r(n) , \,\,\,   \gamma^r_+ (n) =  \gamma_{j_r}^r(n),
\end{equation}   
that we call the ``minimum path" and the ``maximum path" in the block $B^r_{a_1(n),a_2(n)}(O)$, respectively.
\medskip

 Now we define the path $\gamma_0^l(n)$, starting in the origin $O$ and ending in $A_4(n)=(-a_1(n),a_2(n))$, obtained by reversing $\gamma_0^r(n)$ and translating it by $A_4(n)$. Repeating the constructing made before, we define the random variables
$T(\gamma_0^l(n))$, $S(\gamma_0^l(n))$, $T_{\sigma}(\gamma_0^l(n))$, $S_{\sigma}(\gamma_0^l(n))$,
and the event
 $$
H_0^l(n)=\{T_{\sigma}(\gamma_0^l(n))=T(\gamma_0^l(n))\}.
$$

 Observe that
 \begin{equation*}
\left ( T(\gamma_0^l(n)),S(\gamma_0^l(n)),T _{\sigma}(\gamma_0^l(n)),S_{\sigma}(\gamma_0^l(n)),H_0^l(n)    \right  )\Legge
 \end{equation*}
 \begin{equation}\label{ugualiinlegge}
   \Legge \left ( -T(\gamma_0^r(n)),-S(\gamma_0^r(n)), T_{\sigma}(\gamma_0^r(n)),S_{\sigma}(\gamma_0^r(n)),H_0^r(n)        \right  )        .
 \end{equation}

 \medskip

 Translating the path $\gamma_0^l(n)$ vertically we obtain the family
\begin{equation}\label{asinistra}
\gamma_i^l(n)=\gamma_0^l(n)+(2+3(i-1))(0,1)=-\gamma_i^r(n)+A_4(n),\,\,\, i=1,\ldots,m,
\end{equation}
of paths running from the ``right vertical" boundary to the    ``left vertical" boundary of the parallelogram $R^l_{a_1(n),a_2(n)}(O)$ (see Figure 2). Independently of the random variables constructed for the right block $B_{a_1(n), a_2 (n)}^r(O)$, we construct, with the same procedure, the ones for the left block $B_{a_1(n), a_2 (n)}^l(O)$  getting i.i.d.
\begin{equation}\label{camini2}
T(\gamma_i^l(n)), S(\gamma_i^l(n)), T_{\sigma}(\gamma_i^l(n)), S_{\sigma}(\gamma_i^l(n)),H_i^l(n),\,\,\, i=1,\ldots,m.
\end{equation}

 The indices $ i_l$ and $j_l $ and the paths $\gamma^l_-(n)$ and $\gamma^l_-(n)$ are defined analogously to \eqref{indicer}, \eqref{indicer2} and \eqref{contributo?}.

In order to construct a convenient collection of paths within the blocks $B_{a_1(n),a_2(n)}^r(O)$ and $B_{a_1(n),a_2(n)}^l(O)$ we need also to introduce the vertical paths (defined by the sequence of vertices)
$$
\beta^{h}=(hA_1(n)+i(0,1),\,\,\,i=0,\ldots,3m+1),\,\,\, h=-1,0,+1.
$$
These paths run along the left and right vertical boundary of $R_{a_1(n),a_2(n)}^r(O)$ (for $h=0$ and $h=+1$, respectively) and $R_{a_1(n),a_2(n)}^l(O)$ (for $h=-1$ and $h=0$, respectively), which clearly share a side
(see Figure 2). In order to simplify the notation we choose not to make explicit the dependence of these paths from $m$ and $n$. All the edges of the path $\beta^{0}$ are common to both blocks $B_{a_1(n),a_2(n)}^r(O)$ and $B_{a_1(n),a_2(n)}^l(O)$ (but they do not belong to their interiors). We call $E^{r,l}(m,n)=E^{l,r}(m,n)$, $E^{r,r}(m,n)$ and $E^{l,l}(m,n)$ the set of edges belonging to the paths $\beta^0$, $\beta^{+1}$, and $\beta^{-1}$, respectively.
Observe that in the notation $E^{u_1,u_2}(m,n)$, with $u_1, u_2 \in \{l,r\}$, the index $u_1$ indicates if the edges live in a right or left block, whereas the second indicates if they belong to the left or right    ``vertical boundary" of such a block.
 Notice that
\begin{equation}\label{quattroemme}
\left | E^{u_1,u_2}(m,n) \right |=3m+1, u_i \in \{r,l\}, i=1, 2.
\end{equation}

Once all these paths have been defined we can build by suitable concatenations two families of paths $(\eta_i^r(n), i=0,\ldots,m-1)$ and $(\eta_i^l(n), i=1,\ldots,m)$, joining the origin $O$ with the vertices
$A_2(n)$ and $A_3(n)$, respectively. They use edges within the blocks $B_{a_1(n),a_2(n)}^r(O)$ and $B_{a_1(n),a_2(n)}^l(O)$, respectively, and are defined by 
\begin{equation*}
  \eta_i^r(n)=\beta^0_{0,2+3(i-1)}\odot \gamma_i^r(n)\odot \beta^{+1}_{2+3(i-1),3m+1},
\end{equation*}
\begin{equation}\label{catenacon}
  \eta_i^l(n)=\beta^0_{0,2+3(i-1)}\odot \gamma_i^l(n)\odot \beta^{-1}_{2+3(i-1),3m+1},
\end{equation}
for $i=1,\ldots ,m$. The path $\eta_i^r(n)$ ($\gamma_i^l(n)$) starts with vertical edges, along the $y$-axis,
until it reaches the initial vertex of the path $\gamma_i^r(n)$ ($\gamma_i^l(n)$, respectively), which is followed until the end.  Then a convenient number of vertical edges allows to reach  $A_2(n)$ ($A_3(n)$, respectively). It is clear that all these paths are self-avoiding. 
Along these paths we are able to control the corresponding sums $S(\eta_i^u(n))$, $u \in \{r,l\}$.

Indeed, for the sum along each path $\eta_i^u(n), i=1,\ldots, m$,  $u \in \{r,l\}$ the following holds
\begin{equation}\label{contributo}
  S(\eta_i^u(n))=S(\beta^0_{0,2+3(i-1)}  )    + S( \gamma_i^u(n)  )   +S( \beta^{\pm 1}_{2+3(i-1),3m+1}   )  ,
\end{equation}
where the three terms are independent (notice that in the third summand at the r.h.s. the sign is $+1$ when $u=r$ and it is $-1$ when $u=l$, see \eqref{catenacon}).
However notice that, whereas the random variables
$$
\left \{ S( \gamma_i^r(n)  ),\,\,\,  S( \gamma_i^l(n)  ) \,\,\, i=1,\ldots,m \right  \}
$$
are all independent, this is not true for the random variables
$$
\left \{ S( \eta_i^r(n)  ), \,\,\, S( \eta_i^l(n)  )      ,\,\,\, i=1,\ldots,m  \right  \}
$$
due to the presence of the first summand at the r.h.s. of \eqref{contributo}.
For later use define also
\begin{equation}\label{contributo2}
 \eta^u_-(n)=\eta_{i_u}^u(n) ,    \,\,\, \eta^u_+ (n) =  \eta_{j_u}^u(n) .
\end{equation}

Finally we define a number of events whose intersection will determine the \emph{goodness} of  a block  $B_{a_1(n),a_2(n)}^u(O)$, with $u \in \{r,l\}$.

For any $u \in \{ r,l\}$, positive integers $m$ and $n$, and positive constants $K_1$ and $K_2$,  define the events
\begin{equation}\label{F0}
  F_0^u(m,n)=\bigcap_{ i=1  }^{m} H_i^u(n),
\end{equation}
\begin{equation}\label{F1}
  F_1^u(m,n, K_1)=\left \{   \sum_{    e \in E^{u,r}(m,n)     }        |X_e|\leq \frac {K_1}{2}  \right \}\cap \left \{   \sum_{    e \in E^{u,l}(m,n)     }        |X_e|\leq \frac {K_1}{2}  \right \}
\end{equation}
\begin{equation}\label{F2}
 F_2^u (m,n, K_1) =\left \{  S(\gamma_{+}^u(n)) \geq 2K_1  \right \}
 \cap \left \{  S(\gamma_{-}^u(n)) \leq  -2K_1  \right \}  ,
\end{equation}
and
\begin{equation}\label{F3}
  F_3^u(m,n, K_2) =\{ |X_e | \leq K_2 : e \in \bigcup_{i =1}^{m} \gamma_{i}^u(n)  \} .
\end{equation}

The event $F_0^u(m,n)$ ensures that all the symmetry events for the paths $\gamma_{i}^u(n)$, $i=1,\ldots, n$ are realized. The  symmetrization of the variables $T(\gamma_{i}^u(n))$ and $S(\gamma_{i}^u(n))$ is essential to establish the forthcoming identity \eqref{orsa}. The realization of $F_1^u(m,n)$ and $F_2^u(m,n)$ guarantees the availability of a path with a desired sign within the block, whereas the realization of $F_3^u(m,n)$ allows to control the contribution of each individual term to the sums along the paths $\gamma_{i}^u(n)$.

We say that the block  $B^u_{a_1(n),a_2(n)} (O)$ is \emph{good} when the event
\begin{equation}\label{G}
G^u   (m,n, K_1, K_2)     =      F_0^u(m,n) \cap     F_1^u(m,n, K_1) \cap  F_2^u (m,n, K_1) \cap     F_3^u(m,n, K_2)
\end{equation}
is realized.

From \eqref{ugualiinlegge} one has that
\begin{equation}\label{orsazero}
  \mathbb{P} (  G^r   (m,n, K_1, K_2)     )  =
  \mathbb{P} (    G^l  (m,n, K_1, K_2)  ) .
\end{equation}
 If $F_0^u(m,n)$ is realized the random variables $S(\gamma_i^u(n))$, for $i=1,\ldots,m$, have the same law irrespectively of $u \in \{r,l\}$. In particular, this implies that
\begin{equation}\label{orsa}
(S(\eta^r_-(n)  ) ,S(\eta^r_+(n))  \,| \, G^r   (m,n, K_1, K_2)    \Legge   (S(\eta^l_-(n)  )   ,S(\eta^l_+(n))   \,   | \, G^l   (m,n, K_1, K_2).
\end{equation}

Because of \eqref{orsazero}, in the next lemma, without loss of generality, we are allowed to refer only to the right block $B^r_{a_1 (n) , a_2 (n) } (O)$. This lemma ensures that the goodness of a block can be obtained with a probability arbitary close to $1$, with a suitable choice of the parameters.

\begin{lemma} \label{lemmaT}
Suppose that either $p_o <1$ or at least $X_e $ is not a.s. constant. For any $\varepsilon >0 $
there exist $\bar m $, $\bar n$, $\bar K_1$  and  $\bar K_2 $ such that
\begin{equation}\label{probF}
         \mathbb{P} (   G^r   (  \bar  m,   \bar n, \bar  K_1,  \bar K_2)  ) \geq 1- \varepsilon  .
\end{equation}
\end{lemma}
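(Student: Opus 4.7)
The plan is to bound the complements of the four events $F_0^r, F_1^r, F_2^r, F_3^r$ comprising \eqref{G} by $\varepsilon/4$, so that a union bound yields \eqref{probF}. The main obstacle is that the parameters are coupled: $F_1^r$ involves a sum of $|X_e|$ over $6m+2$ vertical boundary edges, so $K_1$ must depend on $m$; in turn, $F_2^r$ requires each $|S(\gamma_i^r(n))|$ to exceed $2K_1$, forcing $n$ to depend on $K_1$; and $F_3^r$ controls $|X_e|$ over the $m\ell(n)$ internal edges, so $K_2$ depends on both $m$ and $n$. I would therefore fix the parameters in the order $\bar m,\bar K_1,\bar n,\bar K_2$.

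The starting observation is that, on $F_0^r(m,n)$, the $S(\gamma_i^r(n))$'s coincide with the $S_\sigma(\gamma_i^r(n))$'s, which are independent with a common symmetric law. Writing
\[
(F_2^r)^c = \{\max_i S(\gamma_i^r(n)) < 2K_1\} \cup \{\min_i S(\gamma_i^r(n)) > -2K_1\},
\]
independence and symmetry give
\[
\mathbb{P}\bigl((F_2^r)^c \cap F_0^r\bigr) \;\leq\; 2\, \mathbb{P}\bigl(S_\sigma(\gamma_0^r(n)) \leq 2K_1\bigr)^m.
\]
Now Lemma \ref{CLT} applies to $S_\sigma(\gamma_0^r(n))$: case (a) when $X_e$ is a.s.\ a nonzero constant, in which case the standing hypothesis forces $p_o<1$ so that \eqref{varia} holds; case (b) when $X_e$ is not a.s.\ constant. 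In either case $\mathbb{P}(|S_\sigma(\gamma_0^r(n))|>L)\to 1$ as $n\to\infty$ for any fixed $L$, and by symmetry $\mathbb{P}(S_\sigma(\gamma_0^r(n))\leq L)\to 1/2$. I would pick $\bar m$ first, large enough so that $2(3/5)^{\bar m}<\varepsilon/4$; the previous bound will then be $\leq\varepsilon/4$ once $\bar n$ is chosen to make $\mathbb{P}(S_\sigma(\gamma_0^r(\bar n))\leq 2\bar K_1)\leq 3/5$.

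With $\bar m$ fixed, pick $\bar K_1$ large enough that $\mathbb{P}(F_1^r(\bar m,n,\bar K_1))\geq 1-\varepsilon/4$; this is possible because $F_1^r$ only constrains a finite sum of $6\bar m+2$ a.s.\ finite copies of $|X_e|$. Next pick $\bar n$ so large that simultaneously (i) $\mathbb{P}(S_\sigma(\gamma_0^r(\bar n))\leq 2\bar K_1)\leq 3/5$, by Lemma \ref{CLT} and symmetry, and (ii) $\|\mathcal{L}(T(\gamma_0^r(\bar n)))-\mathcal{L}(T_\sigma(\gamma_0^r(\bar n)))\|_{TV}\leq \varepsilon/(4\bar m)$, by Lemma \ref{TLL}, whose hypotheses \eqref{varianza} follow from \eqref{boundmean} and \eqref{varianzagrande}. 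In the degenerate case $p_o=p_v=1$ one has $T_\sigma\equiv T$ identically, so (ii) is automatic. Property ii.\ of total variation, applied to the maximal coupling independently across $i$, yields $\mathbb{P}(H_i^r(\bar n))\geq 1-\varepsilon/(4\bar m)$ for each $i=1,\ldots,\bar m$, and hence $\mathbb{P}(F_0^r(\bar m,\bar n))\geq 1-\varepsilon/4$ by a further union bound. Finally, pick $\bar K_2$ so large that $\mathbb{P}(|X_e|>\bar K_2)\leq \varepsilon/(4\bar m\ell(\bar n))$; a union bound over the $\bar m\ell(\bar n)$ edges in $\bigcup_{i=1}^{\bar m}\gamma_i^r(\bar n)$ then gives $\mathbb{P}(F_3^r)\geq 1-\varepsilon/4$. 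Combining the four contributions,
\[
\mathbb{P}\bigl((G^r)^c\bigr) \;\leq\; \mathbb{P}\bigl((F_0^r)^c\bigr) + \mathbb{P}\bigl((F_1^r)^c\bigr) + \mathbb{P}\bigl((F_2^r)^c\cap F_0^r\bigr) + \mathbb{P}\bigl((F_3^r)^c\bigr) \;\leq\; \varepsilon.
\]
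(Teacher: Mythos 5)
Your argument is correct and follows essentially the same route as the paper: the same order of fixing $\bar m,\bar K_1,\bar n,\bar K_2$, the same union bound over the four defining events, and the same use of the symmetrized sums built from Lemma \ref{TLL} via maximal coupling (with the degenerate case $p_o=p_v=1$ treated separately); your explicit bound $2\,\mathbb{P}(S_\sigma(\gamma_0^r(n))\leq 2K_1)^{m}$ for $F_2^r$ intersected with $F_0^r$ is only a cosmetic variant of the paper's argument through $\mathbb{P}(A^r_{\pm,1}(n))\to 1/2$ and the bound $(1/2)^{\bar m-1}$. The one phrase to repair: Lemma \ref{CLT} does not apply verbatim to $S_\sigma(\gamma_0^r(n))$ when $p_o<1$, since its signs ($+$ up to the random index determined by $T_\sigma$, then $-$) are not independent; instead apply Lemma \ref{CLT} to $S(\gamma_0^r(n))$, which is of the required form, and transfer the conclusion to $S_\sigma$ through the event $H_0^r(n)$ whose probability tends to $1$ --- precisely the coupling you already invoke in your requirement (ii), and exactly what the paper does.
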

\begin{proof}
Let us define
\begin{equation}\label{emme}
\bar m = \lfloor \log_2 \varepsilon^{-1} \rfloor +4.
\end{equation}
Once this choice is made, there exists $\bar K_1$ such that
\begin{equation}\label{numerouno}
\mathbb{P} ( F_1^r (  \bar m,n, \bar K_1) ) \geq 1 -\frac{\varepsilon}{4},
\end{equation}
for any $n$ (actually the left hand side does not depend on $n$). 

Let us explain how to choose $\bar n $ to guarantee both
\begin{equation}\label{numerodue2}
\mathbb{P} ( F_0^r (  \bar m,\bar n) ) \geq 1 -\frac{\varepsilon}{4} ,\,\,\,
\mathbb{P} ( F_2^r (  \bar m,\bar n, \bar K_1) ) \geq 1 -\frac{\varepsilon}{4}.
\end{equation}
First notice that, since
\begin{equation}\label{eventcoupling}
\mathbb {P}(F_0^r(\bar m, n))=\mathbb {P} (H_0^r(n)=1)^{\bar m},
\end{equation}
tends to $1$ as $n \to \infty$ (see \eqref{couplingevent}), the first inequality is obtained for $n $ large enough.

Next  define the events
$$
A^r_{+,i} (n)= \{   S(\gamma^r_i(n)) \geq 2 \bar K_1            \} , \,\,\, A^r_{-, i} (n) = \{   S(\gamma^r_i(n)) \leq  - 2 \bar K_1            \} , \text{ for } i = 1, \ldots , m,
$$
and observe that
$$
 F_2^r (  \bar    m,n,   \bar  K_1)   = \left ( \bigcup_{ i=0}^{\bar m-1}  A^r_{+,i} (n)  \right )  \cap \left ( \bigcup_{ i=0}^{\bar m-1}  A^r_{-,i} (n)  \right ),
$$
therefore
$$
\mathbb{P} (     F_2^r (  \bar    m,n,   \bar  K_1)     ) \geq  1-
( 1- \mathbb{P} (A^r_{+,1} (n) ))^{ \bar m } -
( 1- \mathbb{P} (A^r_{-,1} (n) ))^{ \bar m } .
$$
If we prove that
\begin{equation}\label{limiti}
\lim_{n \to \infty} \mathbb{P}(A^r_{+,1} (n))=\lim_{n \to \infty} \mathbb{P}(A^r_{-,1} (n))=1/2,
\end{equation}
then
$$
\liminf_{n \to \infty} \mathbb{P} (     F_2^r (  \bar    m,n,   \bar  K_1)     ) \geq 1- \left (\frac {1}{2}\right )^{\bar m -1}> 1- \frac {\varepsilon}{4},
$$
where the last inequality is guaranteed by the choice \eqref{emme}. As a consequence for $n $ large enough both inequalities in
\eqref{numerodue2} hold.

Finally, we split the proof of \eqref{limiti} in two cases.

\emph{Case 1}: $p_o<1$.

We apply Lemma \ref{CLT} and Lemma \ref{TLL} to the sequence of random variables
\begin{equation}\label{sumform}
S(\gamma^r_0(n))=\sum_{k=1}^{l(n)} Z_{(v_{k-1}(n), v_k(n))}X_{e_k(n)},
\end{equation}
where $\gamma^r_0(n)=(v_0(n), e_1(n), v_1(n),\ldots,e_{\ell(n)}(n), v_{\ell(n)}(n))$.
More precisely Lemma \ref{CLT} serves to ensure that $\mathbb{P}(A^r_{+,0} (n))+\mathbb{P}(A^r_{-,0} (n))$ tends to $1$ as $n \to \infty$. For proving that each of the terms go to $1/2$, recall that
we already established that \eqref{boundmean} and \eqref{varianzagrande} hold. By Lemma \ref{TLL}, the random variable
$   S(\gamma^r_0(n))    $ is equal to $   S_{\sigma}(\gamma^r_0(n))    $ on the  event $H_0^r (n)$ whose  probability tends to $1$ as $n \to \infty$. Since $   S_{\sigma}(\gamma^r_0(n))    $ has a symmetric law  this implies \eqref{limiti}, which ends the proof of \emph{Case 1}.

\emph{Case 2}: $p_o=p_v =1$, and $X_e$ non constant.  Then one has $a_1(n) = -a_2(n) = n$, so the path $\gamma^r_0(n)$ alternates one step to the right and one step downward. Hence $S(\gamma^r_0(n))$ is the sum of $n$ symmetric random variables, each distributed as $X_1-X_2$, with $X_1$ and $X_2$ independently drawn from $\mathcal {L}(X_e)$. Observe that this law cannot degenerate to the Dirac delta in $0$. Applying Lemma \ref{CLT} and the symmetry of the law of $S(\gamma^r_0(n))$, the result \eqref{limiti} is obtained also in this case.

\medskip

Finally, since
$ \lim_{K_2 \to +\infty } \mathbb{P} (F^r_{3}  (\bar m, \bar n , K_2) )     =1$, one can choose $\bar K_2$ in such a way that
\begin{equation}\label{numerodue3}
\mathbb{P} ( F_3^r (  \bar m,\bar n, \bar K_2) ) \geq 1 -\frac{\varepsilon}{4}.
\end{equation}
Putting together the inequalities \eqref{numerouno},  \eqref{numerodue2}, \eqref{numerodue3}, one arrives to the desired inequality \eqref{probF}.
\end{proof}

Taking into account the relation \eqref{contributo} and the definition \eqref{contributo2} we get the following statement
\begin{equation}\label{bounds}
  G^r   (  \bar  m,   \bar n, \bar  K_1,  \bar K_2) \text{ holds }
\Rightarrow
  S(\eta^r_+( \bar n))\in [\bar K_1, \bar K_3],\,\,\,\, S(\eta^r_- ( \bar n))  \in  [-\bar K_3, -\bar K_1],
\end{equation}
where $\bar K_3=\bar K_1+\bar K_2 \ell(\bar n)$. 
As a consequence, when concatening a given path with the minimum and the maximum path on a good block we can always keep 
the sum under control.

\medskip
\section{Proof of the main results}
The proof of Theorem \ref{primo} proceeds along the following two steps: first construct a binary random field on the blocks such that the good ones percolate from the origin with positive probability; then choose adaptively and concatenate paths within each block of a percolating sequence, keeping the partial sums under control.

\emph{Proof of Theorem \ref{primo}}. The first step is to translate all the events and random variables defined so far, computing them on each block $B^u_{a_1(n),a_2(n)}(\mathbf {b})=:B^u(\mathbf {b})$, for $u \in \{r,l\}$ and $\mathbf {b}\in \Z^2$. Next define
$$
(\mathbf {X}_{B^u(\mathbf {b})}, \mathbf {Z}_{B^u (\mathbf {b})})=\{X_e, Z_e, e \in B^u(\mathbf {b}) \}    ,
$$
 for any $\mathbf {b} \in \Z^2$. For a random variable of the form $\xi(O)=g(\mathbf {X}_{B^u (O)}, \mathbf {Z}_{B^u (O)})$
define the translated random variable
$$
\xi(\mathbf {b})=g(\mathbf {X}_{B^u (\mathbf {b})}, \mathbf {Z}_{B^u (\mathbf {b})}),
$$
for any $\mathbf {b}=(b_x,b_y) \in \Z^2$. For translations of an event we use a similar notation.  We also provide
independent copies of the vector of symmetrized random variables $T_{\sigma}(\gamma_i^u(\bar n))$, $i=1,\ldots,m$, with $u \in \{r,l\}$ which are assigned to the translated paths
$$
\gamma_i^u(\mathbf {b})=\gamma_i^u(\bar n)+b_xA_2(\bar n)+b_yA_3 (\bar n)
$$
inside each block $B^u(\mathbf{b})$, which will be called $T_{\sigma}(\gamma_i^u(\mathbf {b}))$, for $i=1,\ldots,m$.

\medskip

At this point we define
\begin{equation}\label{fieldBer}
  J^u (\mathbf {b}) = 1_{ G^u (\mathbf {b}) } , \text{ for } \mathbf {b} \in \Z^2, \text{ and } u \in  \{r,l\},
\end{equation}
where $G^u (\mathbf {b})=G^u (  \bar  m,   \bar n, \bar  K_1,  \bar K_2)(\mathbf {b})$.
 When $G^u (\mathbf {b})$ is realized we say that the block ${B}^u (\mathbf{b})$ is good. This is a random field on the edges  $\vec {E}_2$     of the oriented square lattice $ \vec G_2  $, with $J^r (\mathbf {b})$ assigned to the oriented edge from $\mathbf {b} =(b_x,b_y)$ to  $(b_x+1,b_y)$ and $J^l     ( \mathbf {b} ) $ assigned to the oriented edge from $(b_x,b_y)$ to  $(b_x,b_y+1)$. Notice that for any $\mathbf {b}=(b_x, b_y) \in \Z^2$, the pairs of blocks
  $({B}^r (\mathbf{b}),{B}^l (\mathbf{b}))$ and $({B}^r (b_x,b_y), {B}^l (b_x+1,b_y-1))$ share some vertical edges, and the corresponding random variables enter in the definition of the goodness of a block. As a result each of the pairs $(J^r( \mathbf {b} ), J^l ( \mathbf {b} ))$ and $(J^r(b_x,b_y), J^l (b_x+1,b_y-1 ))$ is not independent.

  Nonetheless, for any $h \in \Z$, the field
  $$
 {\mathbf{J}}^{[h]}= ( J^u   (  \mathbf {b} ) : \mathbf {b}=(b_x,b_y)     \in \Z^2, b_x +b_y = h ,\,\,\,  u \in \{ l,r \}   )
  $$
  is $1$-dependent and it is invariant under the (right) translation, defined as
  $$
  ((b_x,b_y),l) \mapsto ((b_x,b_y),r),\,\,\, ((b_x,b_y),r) \mapsto ((b_x+1,b_y-1),r).
  $$

 To verify the $1$-dependence property take $\mathbf {b}_i=(b_{x,i},b_{y,i})\in \Z^2$, with $b_{x,i}+b_{y,i} = h$, and $u_i \in \{r,l\}$, for $i=1,\ldots,l$. Suppose that $b_{x,i+1}-b_{x,i}\geq 1$,  and in case $b_{x,i+1}-b_{x,i}= 1$ it is forbidden that both $u_i=r$ and $u_{i+1}=l$ hold, for $i=1,\ldots,l-1$. This guarantees that the blocks $B^{u_i}(\mathbf {b}_i)$, for $i=1,\ldots,l$ are disjoint, equivalently that the parallelograms $R^{u_i}(\mathbf {b}_i)$ are not adjacent, for $i=1,\ldots,l$: then the random variables
  \begin{equation}\label{onedep}
  J^{u_i}   (  \mathbf {b}_i ), \,\,\,\, i=1,\ldots,l
  \end{equation}
 are mutually independent.

Next we use Theorem (7.65) in \cite{Grimmett}  with $d=k=1$ (for the original result see \cite{LSS}). It ensures that, for any $p \in (0,1)$, there exists $\varepsilon >0  $
  such that, when $ \mathbb{P} (J^u  (   \mathbf {b}   ) =1 ) \geq 1- \varepsilon$ holds,  a Bernoulli field 
  $$
  {\mathbf{W}}^{[h]}= ( W^u   (  \mathbf {b} ) : \mathbf {b}=(b_x,b_y)     \in \Z^2, b_x +b_y = h ,\,\,\,  u \in \{ l,r \}   )
  $$
  with parameter $p $ can be constructed, such that
  \begin{equation}\label{domina}
     J^u (   \mathbf {b}   ) \geq W^u ( \mathbf {b}  )      \text{ for } u \in \{ r,l \} \text{ and }   \mathbf {b}=(b_x,b_y)  \in \Z^2, \text{ with } b_x+b_y=h.
  \end{equation}

   On the other hand the collection of one-dimensional fields $( {\mathbf{J}}^{[h]}   : h \in \Z    ) $ is i.i.d., therefore one can take the fields $( {\mathbf{W}}^{[h]}   : h \in \Z    ) $ i.i.d. as well.
Now, consider  $ ( W^u ( \mathbf {b} ): u \in \{l,r\} ,  \mathbf {b}     \in \Z^2  )$ as  a field on the edges $\vec E_2$ of the oriented square lattice $ \vec G_2  $. As before, the random variable $W^r(b_x,b_y)$ is placed on the edge from $(b_x,b_y)$ to $(b_{x}+1,b_{y})$ and the random variable $W^l(b_x,b_y)$ is placed on the edge from $(b_x,b_y)$ to $(b_{x},b_{y}+1)$.
The dominance relation \eqref{domina} guarantees that when the field $ ( W^u ( \mathbf {b} ): u \in \{l,r\} ,  \mathbf {b}     \in \Z^2  )$ percolates from the origin, the same is true for the field $ ( J^u ( \mathbf {b} ): u \in \{l,r\} ,  \mathbf {b}     \in \Z^2  )$. This means that if there exists a sequence $((\mathbf {b}_k, u_k), k \in \mathbb {N})$,   with  $W^{u_k} ( \mathbf {b}_k )=1$ and
\begin{equation}\label{uenne}
(b_{k+1,x}, b_{k+1,y})=(b_{x,k}+\delta_{u_k,r},b_{y,k}+\delta_{u_k,l}),
\end{equation}
where $\delta$ is the Kronecker delta, all the corresponding blocks $B^{u_k}(\mathbf {b}_k)$ will be good, for all $k \in \N$. Now it is well known  that there exists a critical threshold $p_c(\vec G_2) \in (0,1)$ such that a Bernoulli field with $p>p_c(\vec G_2)$ percolates from the origin with positive probability (see  \cite{Durrett} and \cite{BBS94}). As a consequence, provided $\varepsilon$ appearing in \eqref{probF} of Lemma \ref{lemmaT} is small enough,  the field $ ( J^u ( \mathbf {b} ): u \in \{l,r\} ,  \mathbf {b}     \in \Z^2  )$ percolates as well. This means that there exists, with positive probability, an infinite self-avoiding path of good blocks, starting from the origin. From now on we suppose that $\bar m, \bar n, \bar K_1$ and $\bar K_2$ have been chosen in such a way that the value of $\varepsilon$ appearing in \eqref{probF} is so small to guarantee a positive probability of percolation from the origin.

The second step of the proof consists in defining an infinite self-avoiding path $\eta^{*}$, starting from the origin,  constructed from a percolating path $((\mathbf {b}_k,u_k), k \in \mathbb {N})$ of good blocks $B^{u_k}(\mathbf {b}_k)$, with $\mathbf {b}_0=O$ (see Figure 3). 

\begin{figure}[ht]
\centering
\includegraphics[width=90mm]{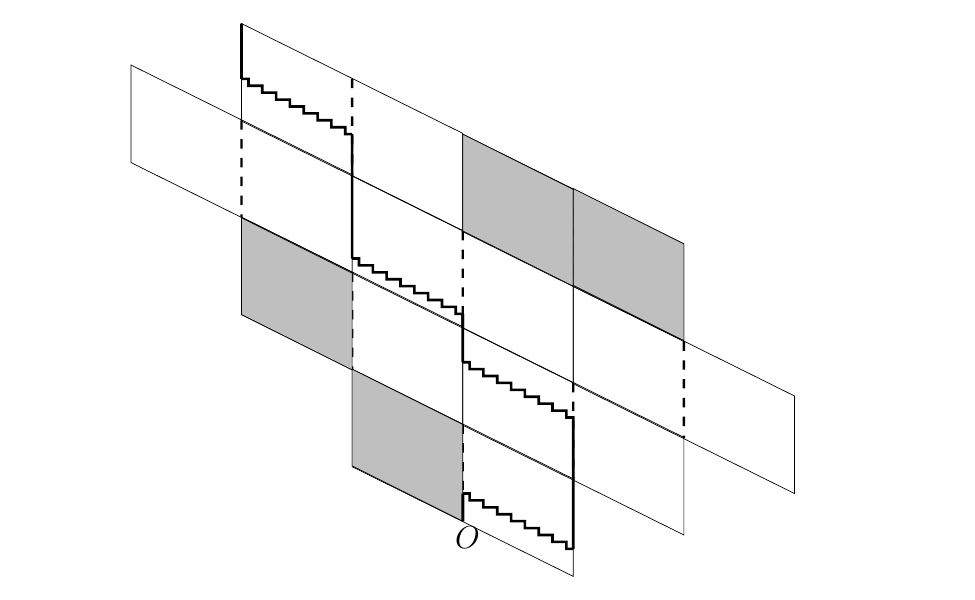}
   \caption{\small{The construction of the path $\eta^*$. White blocks are good.
   }}
\end{figure}

It will be proved that $sup_h|S_h(\eta^{*})|\leq C$, $C$ being a suitable positive constant. The path $\eta^{*}$ is constructed by successive concatenations of the minimum and maximum paths
$$
\eta_-^{u_k}(\mathbf {b}_k)=\eta_{i_k}^{u_k}(\mathbf {b}_k), \,\,\,\, \eta_+^{u_k}(\mathbf {b}_k)=\eta_{j_k}^{u_k}(\mathbf {b}_k),
$$
for the block $B^{u_k}(\mathbf {b}_k)$, where the indices $i_k=i_{u_k}(\mathbf {b}_k)$ and $j_k=j_{u_k}(\mathbf {b}_k)$ are defined analogously to \eqref{indicer} and \eqref{indicer2}. For $\mathbf {b}_k=(b_{x,k},b_{y,k})$, these paths run from $b_{x,k}A_2(\bar n)+b_{y,k}A_3(\bar n)$ to
 $b_{x,k+1}A_2(\bar n)+b_{y,k+1}A_3(\bar n)$.

Recall that by \eqref{bounds} one has
\begin{equation}\label{pathtrans}
  0<\bar K_1\leq S(\eta^{u_k}_+(\mathbf {b}_k))\leq \bar K_3,\,\,\,\, -\bar K_3 \leq S(\eta^{u_k}_-(\mathbf {b}_k)) \leq -\bar K_1<0.
\end{equation}

For $k \in \mathbb {N}$, we denote by $\eta^*_k$ the path starting from the origin $O$ and ending in the site $b_{x,k+1}A_2(\bar n)+b_{y,k+1}A_3(\bar n)$, constructed by the following recursion.
Now suppose that $\eta^*_k$ has been defined, set $s_k=S(\eta^*_k)$ and define
\begin{equation}\label{recurs}
\eta^*_{k+1}=
           \eta^*_k \odot \eta_{-\text{sign} (s_k)}^{u_{k}}(\mathbf {b}_{k}),\,\,\, k \in \N
\end{equation} 
where for definiteness the sign of $0$ is taken to be $-1$.  Setting $\eta^*_0 = \emptyset$, this holds also for $k=0$. As a consequence
\begin{equation}\label{quasimarkov}
s_{k+1}=s_k+S(\eta_{-\text{sign} (s_k)}^{u_{k}}(\mathbf {b}_{k})), \,\,\, k \in \N, s_0=0.
\end{equation}
In other words the last term in the 
concatenation tries to reverse the sign of the current sum on the path $\eta^*$.

For later use, notice that the sequence $(s_k, k \in \N)$ is an homogeneous Markov process: indeed the law of the increment $S(\eta_{-\text{sign} (s_k)}^{u_{k}}(\mathbf {b}_{k}))$ depends only on $\text{sign} (s_k)$ but not on $u_{k}$ and $\mathbf {b}_{k}$, as it results from \eqref{orsa}. Taking into account \eqref{pathtrans}, and the opposite signs of the two summands at the r.h.s. of \eqref{quasimarkov}, we have  $|s_k| \leq \bar K_3$, for any $k \in \mathbb {N}$.

It remains to bound the sum $S_n(\eta^{*})$, when $|\eta^{*}_{k}|<n<|\eta^{*}_{k+1}|$, for some $k \in \N$. Since
$G^{u_{k}}(\bar {m}, \bar {n}  , \bar K_1 , \bar K_2        )   {(\mathbf{b}_{k})}        $ is realized, for any $k \in \N$, the sum of the absolute values of the $|X_e|$'s along each of the paths $\eta_{-\text{sign} (s_k)}^{u_{k+1}}(\mathbf {b}_{k+1})$ is bounded by $\bar K_3$.
 As a consequence $|S_n(\eta^{*})|\leq 2 \bar K_3$ for any $n \in \mathbb {N}$. This ends the proof of Theorem \ref{primo}.
$ \Box$

\medskip

Moving towards the proof of Theorem \ref{Markovtorna} we analyze the behaviour of the Markov process defined in \eqref{quasimarkov}. Not surprisingly, it is related to the nature of the support $\mathcal {X}$ of the random variable $X_e$.

Fix any $0 \neq \bar y \in \mathcal {X}$ and consider the rescaled set $\bar y^{-1}\mathcal {X}$. If this is a finite subset of $\mathbb {Q}$ we say that $\mathcal {X}$ is \emph {finite rational}: indeed in this case there exists $\rho >0$ such that  $\mathcal {X} \subset \rho \Z$. If $\bar y^{-1}\mathcal {X}$ is a countable subset of $\mathbb {Q}$ we say that $\mathcal {X}$ is \emph {countably rational}. Finally, we say that $\mathcal {X}$ is \emph {irrational} if there exist $0 \neq \bar y_i \in \mathcal {X}$, $  i=1,2 $ with $\frac {\bar y_2}{\bar y_1} \notin \mathbb{Q}$.

The first and the third case are taken care by the following Lemma. Later on, we will reduce the second case to the first one.

Notice that when $|x|\leq \bar C, \forall x \in \mathcal {X}$ (a fortiori when $\mathcal {X}$ is finite) then one can choose $\bar K_1=(3m+1)\bar C$ and $\bar K_2=\bar C$ in the definition of a good block so the events $F_1^u(m,n,K_1)$ and $F_3^u(m,n,K_2)$ defined in \eqref{F1} and \eqref{F3} are equal to the whole sample space.

On the other hand it should be observed that when $\mathcal {X}$ is unbounded  only part of it enters in the transition kernel of \eqref{quasimarkov}. Thus, in the irrational case, we always suppose to have chosen $\bar K_2$ so large that both $\bar y_1$ and $\bar y_2$ appearing in the above definition are in $[-\bar K_2, \bar K_2]$.

\begin{lemma} \label{camminimarkov}
Let $p_0, p_v <1$. Consider two independent i.i.d. sequences of random variables $\zeta^-_k \Legge -S(\eta_{-}^{u}(\mathbf {b}))$ and $\zeta^+_k \Legge S(\eta_{+}^{u}(\mathbf {b}))$, both laws conditional to the goodness of the block $B^{u}(\mathbf {b})$. The Markov process
\begin{equation}\label{Schelling}
s_{k+1}^x=s_k^x-\zeta^{-}_{k}1_{\{s_k>0\}}+\zeta^{+}_{k}1_{\{s_k\leq 0\}}, \,\,\, k \in \N, s_0^x=x
\end{equation}
has the following property:
\begin{itemize}
\item[i.] if $\mathcal {X}$ is finite rational, the process $(s_k^x, k \in \N)$ visits the origin infinitely often, for any
    \begin{equation}\label{parigi}
    x=\sum_{i=1}^{2N}x_i,\,\,\,\, x_i \in \pm \mathcal{X} :=-\mathcal{X} \cup \mathcal{X} ;
    \end{equation}
\item[ii.] if $\mathcal {X}$ is irrational, the process $(s^x_k , k \in \N   )$, for any  $x \in \mathbb{R}$, visits any neighborhood of the origin infinitely often.
\end{itemize}

\end{lemma}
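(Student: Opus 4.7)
The plan begins with two preliminary observations. Since the increments $\zeta^{+}_k, \zeta^{-}_k$ both lie in $[\bar K_1, \bar K_3]$ almost surely (they are distributed as sums along good-block paths, cf.~\eqref{bounds}), an induction identical to the one at the end of the proof of Theorem~\ref{primo} yields $|s_k^x| \leq \bar K_3$ for all $k \geq 1$. Moreover, the chain moves opposite to the sign of its current state with a step whose magnitude is bounded away from both $0$ and $\bar K_3$, so it enjoys a built-in drift toward the origin.

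For Part~(i), the finite-rational hypothesis gives $\mathcal{X} \subset \rho \mathbb{Z}$ for some $\rho > 0$, and each $\zeta^{\pm}_k$, being a sum of the fixed number $L = 3\bar m + 1 + \ell(\bar n)$ of signed elements of $\mathcal{X}$, lies in $\rho \mathbb{Z}$. For $x = \sum_{i=1}^{2N} x_i$ with $x_i \in \pm \mathcal{X}$, the chain is confined to the finite set $\rho \mathbb{Z} \cap [-\bar K_3, \bar K_3]$ (the ``$2N$'' is parity bookkeeping ensuring that $x$ and $0$ sit in the same coset visited by the chain when $L$ is odd). The problem reduces to showing that $0$ lies in the closed communicating class in which the chain is eventually absorbed; the strong Markov property then yields infinite returns. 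I would establish this by proving $\mathbb{P}(\zeta^{+}_0 = \zeta^{-}_1) > 0$, equivalently that the supports of $\zeta^{+}$ and $\zeta^{-}$ share a common point of positive mass. This overlap is extracted from the symmetry of $S(\gamma_i^u)$ on $F_0^u$ (via the coupling with the symmetrized $T_\sigma(\gamma_i^u)$), from \eqref{orsa}, and from the fact that the vertical boundary contributions $S(\beta^0), S(\beta^{\pm 1})$ enter on the same footing in both $\eta^u_+$ and $\eta^u_-$. An iteration from $s_2^0=0$ extends the argument to accessibility of $0$ from any chain-reachable state.

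For Part~(ii) pick $\bar y_1, \bar y_2 \in \mathcal{X}$ with $\bar y_2/\bar y_1 \notin \mathbb{Q}$; by Kronecker's theorem the additive subgroup $\bar y_1 \mathbb{Z} + \bar y_2 \mathbb{Z}$ is dense in $\mathbb{R}$. Fix $\varepsilon > 0$. I would show that from every $x \in [-\bar K_3, \bar K_3]$ the chain enters $(-\varepsilon,\varepsilon)$ within a bounded number of steps with positive probability uniform in $x$: by density, a bounded-length sequence of independent $\zeta^{\pm}_k$'s has in its joint support a signed-sum displacement lying in $(-x-\varepsilon,-x+\varepsilon)$, and independence assigns positive probability to the required realization. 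Uniformity in $x$ follows by compactness of $[-\bar K_3, \bar K_3]$ together with local density of the support. A strong Markov plus Borel--Cantelli argument then delivers almost sure infinite visits to $(-\varepsilon,\varepsilon)$; intersecting over $\varepsilon = 1/n$, $n \in \mathbb{N}$, yields the topological recurrence claim.

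The hard part is the accessibility step in Part~(i). The symmetry of the $\gamma$-sums on $F_0^u$ is a clean consequence of the goodness event, but the vertical $\beta$-contributions are genuinely asymmetric whenever $p_v \neq 1/2$, so the supports of $\zeta^{+}$ and $\zeta^{-}$ need not coincide and their overlap must be extracted from the joint distribution of the minimum and maximum paths rather than from a clean distributional identity. The analogous step in Part~(ii) is milder, as only approximate hits are needed and density of $\bar y_1 \mathbb{Z} + \bar y_2 \mathbb{Z}$ easily covers $[-\bar K_3, \bar K_3]$.
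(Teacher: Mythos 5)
Your overall scaffolding (boundedness $|s_k^x|\leq \bar K_3$, reduction of part (i) to a finite-state accessibility question, use of irrationality for part (ii)) matches the paper, but both of your key steps have genuine gaps. In part (i), the paper's crucial observation is exactly the one you deny: since $p_o,p_v\in(0,1)$, flipping \emph{all} the signs $Y_e$ inside a block keeps the block good and negates every path sum (including the $\beta$-boundary contributions), so the supports of the positive increments $-S(\eta^u_-)$ and $S(\eta^u_+)$, conditional on goodness, are \emph{equal}, not merely overlapping; call this common support $\mathcal S\subset[\bar K_1,\bar K_3]$. Your proposed substitute --- exhibiting one common atom so that $\mathbb P(\zeta^+_0=\zeta^-_1)>0$, then ``iterating from $s_2^0=0$'' --- only shows $0\to 0$ and does not give accessibility of $0$ \emph{from} an arbitrary reachable state, which is what recurrence from $x$ requires. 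The paper gets accessibility by a greedy reordering argument: any $y$ reachable from a point of $\mathrm{gr}(\mathcal S)$ satisfies $y=\xi_1+\dots+\xi_n-\xi_{n+1}-\dots-\xi_{n+m}$ with $\xi_i\in\mathcal S$, and because the up- and down-supports coincide one can consume these terms in the sign-dictated order (subtract when the current sum is positive, add otherwise) and land exactly at $0$ with positive probability. Finally, your ``parity bookkeeping'' remark does not address the real issue for the initial condition: one must show that every $x$ of the form \eqref{parigi} lies in $\mathrm{gr}(\mathcal S)$, which the paper does by grouping the $2N$ terms into packets of size $3m+1$ and adding an alternating $\pm\sigma$ with $\sigma$ in the support of $S(\gamma^u_+)$, so that each packet is itself an element of $\mathcal S$.

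In part (ii) your argument overlooks the two difficulties the paper's proof is built around. First, the chain cannot realize an arbitrary ``signed-sum displacement'': the sign of each step is forced by the sign of the current state, so density of $\bar y_1\mathbb Z+\bar y_2\mathbb Z$ by itself does not produce a trajectory ending near $0$. The paper resolves this by observing that the sign-constrained dynamics with the two fixed values $\bar x$ (down) and $\bar y$ (up) induces, after rescaling, the irrational rotation by $\theta=\bar x/\bar y$ on $(0,1]$, whose orbits are dense, so the \emph{constrained} deterministic trajectory enters $(-\epsilon,0]$ at some finite time $m(z)$. Second, in the irrational case $\mathcal L$ may have no atoms at all, so ``independence assigns positive probability to the required realization'' is false for exact support values; one needs the perturbation estimate \eqref{dinamnatom}, showing that all trajectories with increments in small balls around $\bar x,\bar y$ stay on the same side of $0$ as the unperturbed one and end within $O(\epsilon)$ of it, which is also what yields the uniform-in-initial-condition bound \eqref{uniform} over the subintervals $I_h$ (your appeal to ``compactness plus local density'' does not by itself give this uniformity). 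With \eqref{uniform} in hand the conclusion does follow from the strong Markov property and L\'evy's conditional Borel--Cantelli lemma, as you indicate, but without the rotation argument and the continuity step the core of part (ii) is missing.
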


The proof of i. and ii. are of different nature, the first being inherently algebraic, whereas the second uses arguments from dynamical systems.

\begin{proof}[Proof of i.]

Since $p_o$ and $p_v$ are smaller than $1$ we can change the sign of the sum of a  path simply by changing all the signs of the $Y_e$'s associated to its edges; moreover if all the signs of the $Y_e$'s within a block are changed then a good block remains good. As a consequence the support 
$\mathcal{S}$ of the (positive) increments $-S(\eta_{-}^{u}(\mathbf {b}))$ and $S(\eta_{+}^{u}(\mathbf {b}))$, conditional to the goodness of $B^{u}(\mathbf {b})$ are equal and contained in the additive group $\text{gr}(\mathcal {X})$ generated by $\mathcal {X}$.  By \eqref{bounds}, one has the inclusion $\mathcal {S}\subset [\bar {K}_1, \bar {K}_3]$.

 Let $R(x)$ be the set of states reachable from $x$ in a finite number of steps of the chain. It is clear that $R(x) \subset x+\text{gr}(\mathcal {S})$; in particular if $y \in R(0)$, then $y \in \text{gr}(\mathcal {S})$, that is
\begin{equation}\label{reachable1}
y=\xi_1+\ldots+\xi_n-\xi_{n+1}-\ldots-\xi_{n+m},\,\,\,\, \xi_i \in \mathcal {S},\,\,\,\, i=1,\ldots,n+m.
\end{equation}
From this
\begin{equation}\label{reachable2}
y-\xi_1-\ldots-\xi_n+\xi_{n+1}+\ldots+\xi_{n+m}=0,\,\,\,\, \xi_i \in \mathcal {S},\,\,\,\, i=1,\ldots,n+m.
\end{equation}
Now we can achieve the total sum at the l.h.s. of \eqref{reachable2} by choosing
 the order in which each of the summands  enter in the sum in the following way. Starting from $y$, choose to add or subtract one of the terms $\xi_i$, $i=1,\ldots,n+m$, according to the rule:

\begin{itemize}
  \item add one of the $\xi_i, i=n+1,\ldots,m$ when the current sum is negative or zero;
  \item subtract one of the $\xi_i, i=1,\ldots,n$ when the current sum is positive.
\end{itemize}
After $n+m$ steps the result is $0$. But this is exactly a trajectory, with positive probability, of $n+m$ steps of the process \eqref{Schelling}, thus $0 \in R(y)$. This proves that $R(0)$ is an irreducible class which is contained in a finite subset of
$[- \bar K_3,  \bar K_3]$  (see the end of the proof of Theorem \ref{primo}). So $0$ is recurrent.

 The same argument proves the recurrence of $0$ for any starting point $x \in \text{gr}(\mathcal{S})$. It remains to prove that any $x$ of the form \eqref{parigi} belongs to $\text{gr}(\mathcal {S})$. To this purpose observe that one can always suppose that $N=(3m+1)D$ for some integer $D$; if $N$ is not divisible by $3m+1$ add and subtract a fixed non zero element of $\mathcal {X}$. Now one can partition the $2N$ indices in the sum \eqref{parigi} in $2D$ sets, call them $W_j, j=1,\ldots,2D$, of cardinality $3m+1$. Let us choose a value $\sigma$ in the support of $S(\gamma_+^u(\bar n))$. It is easily verified that
$$
\xi_j=\sum_{i \in W_j} x_i + (-1)^j\sigma,  \text{ }j=1,\ldots,2D
$$
belongs to $\mathcal {S}$ (the first sum corresponding to the contribution of the vertical boundaries and the second to that of  the path $S(\gamma_{\pm}^{u}(\mathbf {b}))$ inside a block $B^u(\mathbf {b})$) and
$$
x=\sum_{i=1}^{2N}x_i=\sum_{j=1}^{2D} \xi_j \in \text{gr}(\mathcal S).
$$
\end{proof}
\begin{proof}[Proof of ii.]
In this case, we aim to prove that the number of times the process enters in any neighbourhood of $0$ is a.s. infinite. One can always assume that the initial point $x$ belongs to the invariant interval $[-\bar K_3, \bar K_3]$, since this can be reached in a finite number of steps. Then, for any positive integer $N$ divide $[-\bar K_3, \bar K_3]$ in subintervals of the form
$I_h=(\bar K_3-h\varepsilon, \bar K_3-(h-1)\varepsilon]$, for $h=1,\ldots,2N$, where $\varepsilon=\frac {{\bar K}_3}{N}$ (the point $-\bar K_3$ is added to the last interval). We will prove that there exists $\delta>0$ and $m_h \in \mathbb {N}$  such that
\begin{equation}\label{uniform}
\mathbb {P}(|s_{m_h}^{\zeta}| \leq 2\varepsilon) \geq \delta, \forall \zeta \in \bar I_h, \text{ for } h=1,\ldots, N .
\end{equation}
Notice that we can choose $m_h =0$, for $h=N-1$ and $h=N$. Suppose now that \eqref{uniform} holds.  Starting from $T_0^h=0$, for each of the intervals $I_h, h=1,\ldots,N-2$   define the sequence of successive return times
$$
T_{k+1}^h=\inf \{n>T_{k}^h+m_h: s_n \in I_h\},\,\,\, k=0,1,\ldots
$$
(where $\inf \emptyset =+\infty$). Consider the events
\begin{equation}\label{hitting}
E_k^{h}=\{|s^{x}_{T_{k-1}^h+m_h}|\leq 2\epsilon\},
\end{equation}
for $k=1,\ldots$, and for each $h =1, \ldots , N$, the filtration
$
 \left (\mathcal {F}^h_k=         \sigma \{s_n^{x}, n \leq T_k^h\},  \text{ } k \in \N    \right ).
$

It is immediately verified that $E_k^{h} \in \mathcal {F}^h_k$. Moreover, from \eqref{uniform} one has
$$
\mathbb P(E_k^{h}|\mathcal {F}^h_{k-1})\geq \delta, \text{   if    } T_{k-1}^h < +\infty,
$$
for $k \geq 2$. From L\'evy's extension of the Borel-Cantelli Lemmas  (see \cite{Williams}, p. 124) it is obtained that
\begin{equation}\label{Will}
   T_k^h <+ \infty  \text{ for } k \in \mathbb{N}   \Rightarrow        Z^h  :   =\sum_{k=1}^{\infty} 1_{E_k^h}=+\infty.
\end{equation}
Now let
$$
H=\inf \{h=1,\ldots,N: T_k^h<+\infty , \text{ for } k\in \N \}
$$
and notice that $H$ is a.s. finite since the number of visits to the positive axis is a.s. infinite. Now
$$
\mathbb {P}(|   s_n^{x}|\leq 2\epsilon  , \text{ i.o.}      )\geq \sum_{h=1}^{N-2} \mathbb {P} (H=h, Z^h=    +\infty)+\mathbb {P}(H=N-1) +\mathbb {P}(H=N)
$$
$$
\geq \sum_{h=1}^{N-2} \mathbb {P} (H=h,  T^h_k < +\infty \text { for } k \in \mathbb{N},
 Z^h=    +\infty)+\mathbb {P}(H=N-1) +\mathbb {P}(H=N)
$$
$$
= \sum_{h=1}^{N-2} \mathbb {P} (H=h,  T^h_k < + \infty \text { for } k \in \mathbb{N}  )+\mathbb {P}(H=N-1) +\mathbb {P}(H=N)
$$
$$
= \sum_{h=1}^{N-2} \mathbb {P} (H=h  )+\mathbb {P}(H=N-1) +\mathbb {P}(H=N) =1,
$$
where we have used \eqref{Will} to get the first equality. By consequence it remains only to prove \eqref{uniform}.

 Now recall that in the irrational case there exists $\bar x, \bar y \in \pm \mathcal {X}$ both positive with $0<\theta=\frac {\bar x}{\bar y} <1$ irrational. Suppose first that $\bar x$ and $\bar y$ are both atoms. Then, since $t=\ell(\bar n)+3m+1$ is the length of the paths constructed inside each block, we have $t\bar x$ and $t\bar y$ both belong to $\mathcal {S}$ (this may require larger values of the parameters for a good block,  which is always possible to specify in advance). Redefining the values of $\bar x$ and $\bar y$  we can set $t=1$ in what follows. Next consider the dynamical system
\begin{equation}\label{dinam}
w_{n+1}=w_n-\bar x 1_{\{w_n>0\}}+ \bar y 1_{\{w_n\leq 0\}}, \text{ for } n \in \mathbb{N} .
\end{equation}
This dynamical system represents  a  transition, having positive probability, of the Markov process of interest when the contribution of negative paths is $-\bar x$ and that of positive paths is $\bar y$. This dynamical system started from any point  enters in the
invariant interval $[-\bar y, \bar y]$ after a certain finite number of steps. Next we rescale the system dividing by $\bar y$, getting for the rescaled sequence $\tilde w_n=w_n/\bar y\ \in [-1,1]$ the recursion
\begin{equation}\label{dinam2}
\tilde w_{n+1}=\tilde w_n-\theta 1_{\{\tilde w_n>0\}}+1_{\{\tilde w_n\leq 0\}}=:f(\tilde w_n).
\end{equation}
Now let $\tilde w_n^{z}$ be the iterates of \eqref{dinam2} started from $\tilde w_0^{z}=z$, $0<z\leq 1$. Fix $\epsilon>0$ and suppose that $m (z)$ is the smallest integer such that $-\epsilon \leq \tilde w_{m(z)}^{z} \leq 0$. Then it can be easily verified that for any $\tilde z \in (z,z+\epsilon]$ it remains $|\tilde w_{m(z)}^{\tilde z}|\leq \epsilon$. Since the length of the intervals $I_h$ is precisely $\epsilon$, if we show that such an $m$ exists, this will end the proof for the irrational atomic case. To this purpose notice that the induced map on the interval $(0,1]$
$$
\tilde f (w)=f(w)1_{\{f(w)>0\}}+(f\circ f) (w)1_{\{f(w)\leq 0\}},\,\,\, w\in (0,1]
$$
has the form
$$
\tilde f (w)=\left \{ \begin{array}{lcl}
1-\theta + w  & \text{ if } & w \in (0, \theta] \\
w- \theta & \text{ if } & w \in ( \theta, 1]
\end{array}
\right .
$$
which coincides with the rotation map on the circle (parameterized by $(0,1]$) with an (irrational) angle $\theta $. Since it is well known that all the orbits of an irrational rotation map are dense, this show that $m (z)$ exists (see e.g. \cite{KH95} p. 27).

Next we turn to the general case in which $\bar x, \bar y$ belong to $\pm \mathcal {X}$, but they are not necessarily atoms.
This requires to control a set of ``perturbed" trajectories, to which a suitable continuity argument has to be applied.
So, let us to consider the image of the functions $(w_n^{\tilde {z}}(u_0,\ldots,u_{n-1}), n=1,\ldots,m(z))$ defined below, for $\tilde {z} \in [z,z+\epsilon]$, with $0<z\leq 1$, and $|u_i|<\varrho $, for $i=1,\ldots,m(z)-1$, where $ \varrho>0$ is suitably small. These functions are defined by the recursion
$$
w_{n+1}^{\tilde z}(u_0,\ldots,u_{n-1},u_n)=w_n^{\tilde z}(u_0,\ldots,u_{n-1})+
$$
\begin{equation}\label{dinamnatom}
+(\bar y+u_{n}) 1_{\{w_n^{\tilde z}(u_0,\ldots,u_{n-1})\leq 0\}}-(\bar x+u_{n}) 1_{\{w_n^{\tilde z}(u_0,\ldots,u_{n-1})>0\}} ,\,\,\, w_0={\tilde z},
\end{equation}
 taken for $n=1,\ldots,m(z)-1$. When $\varrho $ is small enough, for $n=1,\ldots,m(z)-1$ we can guarantee that all the trajectories of \eqref{dinamnatom}, for any $|u_i|< \varrho$, $i=1,\ldots,m(z)-1$ are as close as desired to that of $w_n^{\tilde z}(0,\ldots,0)$, and in particular, for any $n=1,\ldots,m(z)-1$, they all lie either on the negative or on the positive side of the axis. As a consequence
$$
w_n^{z}(u_0,\ldots,u_{n-1})-w_n^{z+\epsilon}(u_0,\ldots,u_{n-1})=\epsilon, n=1,\ldots,m-1,m(z),
$$
which guarantees that, since $-\epsilon<w_{m(z)}^{z}(0,\ldots,0)\leq 0$ we have that the image of
$$
(\tilde {z},u_0,\ldots,u_{m(z)-1}) \in [z,z+\epsilon]\times (- \varrho  , \varrho)^{m(z)} \to w_{m(z)}^{\tilde z}(u_1,\ldots,u_{m(z)-1})
$$
is contained in the interval $(-\epsilon-\epsilon^*,\epsilon+\epsilon^*)$ for $\varrho$ suitably small, for any possible choice of $\epsilon^*>0$. In view of the assumption that the open balls of radius $\varrho $ around both $\bar x$ and $\bar y$ are charged with positive probability by both the laws of $-S(\eta_{-}^{u_{k}}(\mathbf {b}^{k}))$ and $S(\eta_{+}^{u_{k}}(\mathbf {b}^{k}))$, respectively, this ends the proof.
\end{proof}

\medskip
\begin{rem}
The Markov process \eqref{Schelling} has a peculiar form. Indeed notice that
if we replace $S(\eta^u_{\pm}(\mathbf {b}))$ with $S(\gamma^u_{\pm}(\mathbf {b}))$ defined in \eqref
{contributo?}, that is we neglect the contribution to the sum coming from the vertical boundaries of each block, one would get increments with the symmetry property $-S(\gamma^u_{-}(\mathbf {b})) \Legge S(\gamma^u_{+}(\mathbf {b}))$.
 In this case $(|s^x_k|, k \in \mathbb {N})$ is again a Markov process, of the type known in the literature as the von Schelling process
 \cite{PW2011}, or with a different name, the absolute value chain \cite{Knight}.
\end{rem}

\begin{proof}[Proof of Theorem \ref{Markovtorna}] First of all, using ergodicity w.r.t. vertical translations, with the choice of the block parameters made in Theorem \ref{primo}, one finds a.s. an oriented path of percolating blocks, starting from $B^u(h,h)$ for some positive integer $h$ and $u \in \{l,r\}$. The path $\hat \eta$  is constructed by the concatenation of the vertical path $\beta^0_{0,2(3m+1)h}$ joining the origin with the vertex $V=(0,2(3m+1)h)$ with edges placed on the $y$-axis, and an infinite path $\eta^* $ constructed according to the rules \eqref{recurs} and \eqref{quasimarkov}, but starting from $B^u(h,h)$ (see Figure 4). It is clear that the contribution of the vertical part of the path gives an initial value for the recursion \eqref{quasimarkov} which is in general different from $0$. If $\mathcal {X}$ is either finite rational or irrational, Lemma \ref{camminimarkov} directly allows to prove the theorem. For the first case notice indeed  that the initial value for \eqref{quasimarkov} has always to be a sum of an even number of elements of $\pm \mathcal {X}$.

\begin{figure}[ht]
\centering
\includegraphics[width=90mm]{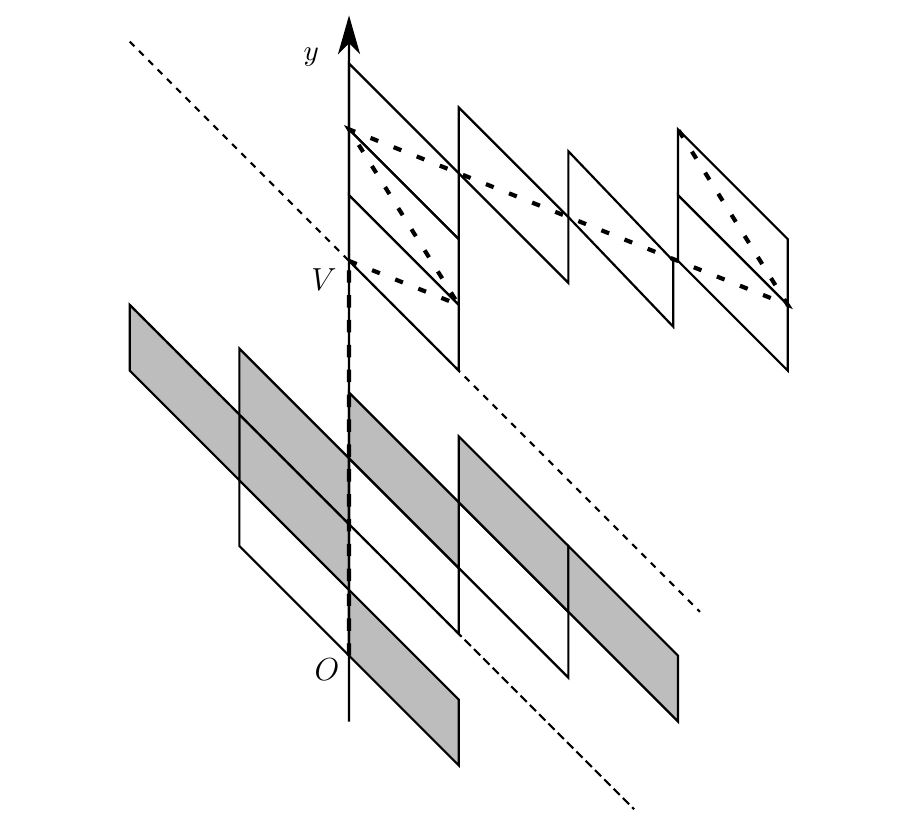}
   \caption{\small{The construction of the path $\hat \eta$ in Theorem \ref{Markovtorna}. The path $\hat \eta $ is
the concatenation of a vertical path from $O$ to $V$ with the infinite path $\eta^*$ started in $V$, represented with dashed segments in the figure.
   }}
\end{figure}

Therefore, in the remaining part of the proof, we have to take care only of the countably rational case.
We will reduce this case to
the finite rational one; indeed we will prove the existence of a self-avoiding
path $\hat \eta$ with bounded partial sums  starting from the origin that not only visits infinitely
often any neighborhood of $0$, but visits infinitely often the origin itself.

In the definition of good blocks, even constraining the $X_e$'s within a block to lie in some finite subset of $\mathcal{X}$, it is possible to keep the probability that a block is good arbitrarily close to $1$ and thus the probability of a percolating path from the origin to be positive. However, since the a.s. existence of a percolating path is guaranteed only by shifting the starting point vertically, one cannot be sure that the values appearing on the vertical path joining its starting point with the origin lie inside the allowed subset.
This requires a ``revised" definition of good block where the support  $\mathcal{X} $ is replaced by a sequence of finite subsets whose size is \emph{adaptively} adjusted.

So let $\mathcal{X}^*_0 \subset \mathcal{X}^{**}_0$ be two \emph{finite} subsets of $\mathcal{X}$ and define
\begin{equation}\label{X1}
  \bar F_1^u(m,n_0, \mathcal{X}^*_0)=\left \{  X_e \in \mathcal{X}^*_0: e \in E^{u,r}(m,n_0) \cup  E^{u,l}(m,n_0)  \right \}       
\end{equation}
\begin{equation*}\label{X2}
 \bar F_2^u (m,n_0, \mathcal{X}^*_0) =F_2^u (m,n_0, (3m+1)\sup |\mathcal{X}^*_0|) =
\end{equation*}
\begin{equation}
\label{X2}
=\left \{  S(\gamma_{+}^u(n_0)) \geq 2(3m+1)\sup |\mathcal{X}^*_0| \right \}
 \cap \left \{  S(\gamma_{-}^u(n_0)) \leq  -2(3m+1)\sup |\mathcal{X}^*_0|  \right \}  ,
\end{equation}
\begin{equation}\label{X3}
\bar F_3^u(m,n,\mathcal{X}_0^{**}) =\{ X_e  \in \mathcal{X}^{**}_0 : e \in \bigcup_{j =0}^{m-1} \gamma_{j}^u(n_0)  \} .
\end{equation}
A ``revised" good block is a block for which the event
\begin{equation}\label{G8}
\bar G^u   (m,n_0, \mathcal{X}^*_0, \mathcal{X}^{**}_0)     =      F_0^u(m,n_0) \cap     \bar F_1^u(m,n_0, \mathcal{X}^*_0) \cap  
\bar F_2^u (m,n_0,  \mathcal{X}^*_0) \cap     \bar F_3^u(m,n_0,\mathcal{X}^{**}_0)
\end{equation}
is realized. It is assumed that the parameters of the block guarantee that the probability of a good block is larger than $1-\varepsilon$, where $\varepsilon$ is fixed once and for all and it  is so small to ensure a positive probability of percolation of good blocks from the origin. More precisely, as in the proof of Lemma \ref{lemmaT}, we take $m= \lfloor \log_2 \varepsilon^{-1} \rfloor +4$, $\mathcal{X}^*_0$, $n_0$ and $\mathcal{X}^{**}_0$ large enough to control that the probabilities of \eqref{X1}, \eqref{F0} and \eqref{X2}, and finally \eqref{X3}, are larger than $1-\frac {\varepsilon}{4}$.

Now define the \emph{cluster} $\mathcal{C}(O)$ as the collection of vertices
belonging to the oriented paths of revised good blocks starting from $B^r(O)$ or $B^l(O)$, and let $C_1$ be the event that this is collection is infinite.
If $C_1$ is not realized, define the random variable
$$
H_{1}=\inf \{t >0 : B^u( b_x, b_y) \cap \mathcal{ C}(O)  = \emptyset \text{ for } b_x+b_y = 2t -1, u \in \{r,l\}   \}.
$$
$H_1$ is a stopping time w.r.t. the filtration $(\mathcal {G}_h, h \in \mathbb {N})$, where $\mathcal {G}_h$ is the $\sigma$-algebra generated by all the variables associated to the blocks $B^u(b_x,b_y)$, with $0\leq b_x+b_y\leq 2h-1$, $u \in \{r,l\}$. 

Next we update the definition of good block choosing  
\begin{equation}\label{iso}
\mathcal{X}^*_1 = \mathcal{X}^*_0 \cup \{X_e: e \in \beta^0_{0,2(3m+1)h}\},
\end{equation}
and then select $n_1$ and $\mathcal{X}^{**}_1 \supset \mathcal{X}^*_1$ in such a way that the probability that $\bar G^u   (m,n_1, \mathcal{X}^*_1, \mathcal{X}^{**}_1)$ is realized is larger than $1-\varepsilon$. 

After this, construct the \emph{cluster} $\mathcal{C} (H_1,H_1)$  of vertices 
which belong to the oriented paths of good blocks starting from $B^r(H_1,H_1)$ or $B^l(H_1,H_1)$ and define the event $C_2$ that this cluster is infinite. We warn the reader that the oriented graph structure remains the same in spite of the fact that the size of blocks can change because $n_1$ has replaced $n_0$. Since all the variables associated to the blocks $B^u(b_x,b_y)$, with $b_x+b_y\geq 2H_1$, $u \in \{r,l\}$ are independent of the $\sigma$-algebra $\mathcal {G}_{H_1}$, it is
\begin{equation*}\label{condiriso0}
  \mP ( C_{2}  | \mathcal{G}_{H_1})       \geq  1-\varepsilon     .
\end{equation*} 

It should be clear that this argument can be iterated, so a sequence of stopping times $H_k$ and the corresponding events $C_k$, are defined for $k=1,\ldots,K$, where $K$ is the first index $k$ such that $C_k$ is realized (hence $H_k=+\infty$). If the event $C_k$ is realized, an infinite oriented path of revised good blocks exists, and if $C_k$ is not realized, $H_k$ indicates how many stripes of blocks one has to exclude before trying a new attempt for building the path, independently of the past ones. The construction can be always performed by keeping, for any $k \in \mathbb {N}$, 
\begin{equation*}\label{condiriso}
  \mP ( C_{k +1}  | \mathcal{G}_{H_k})       \geq  1-\varepsilon  .
\end{equation*}
on the event $\{H_k<\infty\}$. As a consequence, by the already cited Levy's extension of the Borel-Cantelli lemma, $K$ is finite a.s. See Figure 4, where $H_1=1 $, $H_2=3$, and $K=3$. 

Conditional to $\mathcal{G}_{H_K}$, consider the recursion \eqref{quasimarkov},
constructed over a percolating oriented path $(B^{u_p}(\mathbf {b}_p), p \in \N)$, with $\mathbf {b}_0=B^{u_0}(H_K,H_K)$, of $H_K$-adapted good blocks: it is still a Markov process, started from
\begin{equation}\label{condipasta}
s_0=\sum_{e \in \beta^0_{0,2(3m+1)H_K}} Y_e X_e. 
\end{equation} 
Using Lemma \ref{camminimarkov} one finally gets the recurrence of $0$.
\end{proof}

\medskip 
\begin{proof}[Proof of Proposition \ref{secondo}]
First of all we recall that for bond percolation the critical point for the square lattice
is $p_c =\frac{1}{2}$.  Therefore, if $\mathbb{P} (X_e =0 ) < \frac{1}{2}$
the edges where the random variables take the value zero do not percolate.
By continuity of the measure there exists $\delta>0 $ such that
$\mathbb{P} (|X_e| < \delta ) < \frac{1}{2} $, thus, with probability one, any path in $\Gamma_O$ has an edge $e $ such that $|X_e| \geq  \delta$. It is readily shown that this implies  
$   M_c (p_o,p_v ; \mathcal{L})    \geq \delta /2>0$, as done in  the following lemma

\begin{lemma}\label{facile}
Let  $(a_n \in \R : n \in \N)$ be a sequence of real  numbers, and $A\in (0,+\infty )$. If
$\sup_{n \in \N} |a_n|>A$, then $   \sup_{n \in \N}   |\sum_{k=1}^{n} a_k| >  \frac{A}{2} $.
\end{lemma}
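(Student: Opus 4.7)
The plan is quite short since the statement is an elementary one-line observation. Let $S_n = \sum_{k=1}^n a_k$ denote the partial sums, with the convention $S_0 = 0$. By hypothesis there exists an index $m \in \N$ such that $|a_m| > A$. The key identity is simply $a_m = S_m - S_{m-1}$.

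From here I would apply the triangle inequality in its reverse form: $A < |a_m| = |S_m - S_{m-1}| \leq |S_m| + |S_{m-1}|$, so at least one of $|S_m|$ and $|S_{m-1}|$ must exceed $A/2$. In either case $\sup_{n \in \N}|S_n| \geq \max(|S_m|, |S_{m-1}|) > A/2$, which is the desired conclusion.

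There is no real obstacle here; the only mild subtlety is remembering to treat $S_0 = 0$ so that the identity $a_m = S_m - S_{m-1}$ still makes sense when $m = 1$ (in which case $|a_1| = |S_1| > A > A/2$ directly). Everything else is a one-line triangle inequality, which is presumably why the authors isolate the statement as a standalone lemma only to invoke it in the proof of Proposition \ref{secondo} to pass from the existence of an edge with $|X_e| \geq \delta$ along every path to the bound $M_c \geq \delta/2$.
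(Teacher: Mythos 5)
Your proof is correct and is essentially identical to the paper's: the authors also fix an index $\bar n$ with $|a_{\bar n}|>A$, treat $\bar n=1$ separately, and for $\bar n>1$ observe that $|S_{\bar n-1}|$ and $|S_{\bar n}|$ cannot both be at most $A/2$, which is exactly your reverse-triangle-inequality step. Nothing further is needed.
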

\begin{proof}[Proof of Lemma]
By assumption there exists $\bar n \in \mathbb{N}$ such that
$|a_{\bar n }| > A$.  Let us fix such a $\bar n $.
 If $\bar n =1 $ then  $   \sup_{n \in \N}   |\sum_{k=1}^{n} a_k| \geq  | a_1 | >  A$.
If $\bar n >1 $ then
$$   \sup_{n \in \N}   |\sum_{k=1}^{n} a_k| \geq
  \sup \left \{   |\sum_{k=1}^{ \bar n -1} a_k|     , \,\,
 |\sum_{k=1}^{ \bar n } a_k|       \right \}   >  \frac{A}{2} .
$$
\end{proof}
\noindent \emph{Proof of Proposition \ref{secondo}, continued.}
In order to prove $b.$ it is enough to notice that if $\mathbb{P} (X_e =0 ) > \frac{1}{2}$ then there is bond percolation. Therefore
there exists with positive probability an infinite self-avoiding  path, starting from the origin,   using only edges $e $  with $X_e =0$.
\end{proof}

\begin{proof}[Proof of Theorem \ref{uniforme}]
In order to prove the  implication $\Rightarrow $ we prove that each of the following two conditions imply that $\bar{M}_c (p_o, p_v; \mathcal{L}) = +\infty $.

\begin{itemize}
\item[1.] The support of $X_e$ is unbounded.
\item[2.] The law of $X_e$ has a non zero atom.
    \end{itemize}

As far as item 1. is concerned, it is enough to notice, that, with probability $1$, there exists a vertex $u \in \Z^2$ with all $4$ incident edges carrying a value of $X_e$ which exceeds in absolute value any given constant $C$. This ends the proof for item 1.

Concerning item 2., suppose w.l.o.g. that $1$ is an atom. Then for any arbitrary large integer $L$ there exists a.s. a ball $\mathcal {B}(\mathbf{u},L)$ in the  $L_1$ norm, centered in some vertex $\mathbf {u}=(u_x,u_y) \in \Z^2$, with the following  property.
For any edge $e$ (seen as on open segment) inside the ball $\mathcal {B}(\mathbf {u},L)$, it is $X_e =1$, and moreover 
$$
e=\{(u_x+a,u_y+b),(u_x+a+1,u_y+b)\}\Rightarrow \text {sign}(Y_{e})=\text {sign} (a),
$$
$$
e=\{(u_x+a,u_y+b),(u_x+a,u_y+b+1)\}\Rightarrow \text {sign}(Y_{e})=\text {sign}(b),
$$
where  the sign of $0$ is taken to be  $+1$. 
In other words each oriented  edge inside the ball points always in the direction of the boundary. 
 Then it is not difficult to realize that any path from $\mathbf {u}$ to the boundary of the ball will have a sum equal to  $L$. This denies the possibility that $\bar{M}_c (p_o, p_v ; \mathcal{L})$ remains bounded.

Last  we prove the implication $ \Leftarrow $. 
This is shown by controlling the contribution of a path joining two sites on the same horizontal (or vertical) line. This is achieved by iterating 
suitable number of times a $4$--cycle, following in the direction which makes the current sum closer to zero.

First observe that,  by assumption,  the distribution has no atoms different from zero: as a consequence,  on any cycle $\sigma$ the sum $S(\sigma)$
 is either $0$, when all the edge variables $X_e$ on the cycle are $0$, otherwise it is different from zero a.s. In the latter case either $S(\sigma)$ or $S(-\sigma)=-S(\sigma)$ is positive (and the other negative). In particular this is true for the cycle $\sigma_p$ joining the vertices $(p,0)$, $(p+1,0)$, $(p+1,-1)$, $(p,-1)$ and $(p,0)$, with $p \in \mathbb {N}$, that we are going to use in the construction of the path. Obviously $|S(\sigma_p)|\leq 4\bar C$, where
  $$
\bar C  = \sup \{|x|  :x  \in \mathcal {X}\} .
$$
  Moreover if $S(\sigma_p)<0 $ then
 \begin{equation}\label{fafa}
    S_{k} (\sigma_p) \in (-4 \bar C , 2 \bar C ), \text{ for }  k = 1,2,3,4.
 \end{equation}

Next suppose that $\mathbf{u }=O $ and $\mathbf{v}=(n, 0) \in \Z^2$ with  $n>0 $. We will construct explicitly a path $\gamma^{\flat} \in \Gamma_{O, \mathbf{v}}$ that satisfies
\begin{equation}
\label{bound}
\sup_{n\leq |\gamma^{\flat}|} |S_n (\gamma^{\flat} )|\leq 6\bar C.
\end{equation}
The construction of the path is done by recursion over $k$, $\gamma^{\flat}_k$ being the initial part of the path, joining $O$ with $(k,0)$.
\begin{itemize}
\item[1.] Let $\gamma^{\flat}_1$ be the edge joining $O$ to  $(1,0)$.
\item[2.] Given the path $\gamma^{\flat}_k$, for any $k=1,2,\ldots,n-1$, we form the concatenation $\gamma^{\flat}_{k+1}=\gamma^{\flat}_k \odot \tau_k$ in the following way:
\begin{itemize}
\item[2.a.] If
$$
Z_{ ((k , 0) ,     (k+1, 0) ) }  X_{ \{(k , 0) ,     (k+1, 0) \} }   S(\gamma^{\flat}_k)\leq 0
$$
then $ \tau_k=  \{(k , 0) ,     (k+1, 0)\}     $.
\item[2.b.]  If
$$
Z_{( (k , 0) ,     (k+1, 0) ) }  X_{ \{(k , 0) ,     (k+1, 0) \} }    S (\gamma^{\flat}_k)    > 0
$$
then define the sign variable $\xi_k=-\text{sign}(S (\gamma^{\flat}_k)S(\sigma_k))$ and set
$$
\tau_k= (\xi_k\sigma_k)^{\odot i_k}\odot { ((k , 0) ,     (k+1, 0) ) },
$$
where $i_k$ is the smallest integer $i$ such that
$$
S (\gamma^{\flat}_k)\left (S (\gamma^{\flat}_k)+i\xi_kS(\sigma_k)\right) \leq 0.
$$
\end{itemize}
\end{itemize}

\medskip \medskip

In order to prove the bound \eqref{bound} we start by proving that
$$
|S(\gamma^{\flat}_k)|\leq 4 \bar C ,
$$
for any integer $ k $. This is certainly true for $k =1$. Now suppose that this is true for a certain $k $ and let us prove it for $k+1$.
If $2.a.$ holds for $k $ this is trivial. If $2.b.$ holds for $k $ suppose w.l.o.g. that $S(\gamma^{\flat}_k) >0 $.
Then for any $i= 1 , \ldots , i_k -1 $ one has that
$$
          S( \gamma^{\flat}_k \odot     (\xi_k\sigma_k)^{\odot i}      ) = S(\gamma^{\flat}_k)+i\xi_kS(\sigma_k) \in ( 0, S( \gamma^{\flat}_k )   ) ,
$$
therefore
$$
 S( \gamma^{\flat}_{k +1} )= S (\gamma^{\flat}_k)+i_k\xi_kS(\sigma_k) + Z_{ ((k , 0) ,     (k+1, 0) ) }  X_{ \{(k , 0) ,     (k+1, 0) \} } \in (-4 \bar C , \bar C ) .
$$
Taking into account \eqref{fafa} for the intermediate steps of the cycle $ \sigma_k  $ one has the
desired inequality \eqref{bound}.

The argument can be continued on a vertical path; as a consequence the bound \eqref{bound} always holds
for a suitable path joining any two vertices $\mathbf{u},\mathbf{v} \in \Z^2$. \end{proof}

\bibliographystyle{abbrv}


\end{document}